\newtheorem{theorem}{Theorem}
\newtheorem{lemma}[theorem]{Lemma}
\newtheorem{definition}[theorem]{Definition}
\newtheorem{proposition}[theorem]{Proposition}
\DeclareMathOperator{\diam}{diam}
\DeclareMathOperator{\Hdim}{\mathcal{H}}
\newcommand{\R}{\mathbb{R}}
\newcommand{\N}{\mathbb{N}}
\begin{document}

\title{Uniform bounds for the heat content of open sets in Euclidean space}
\date{29 July 2016}

\author{M. van den Berg\ *,\ K. Gittins}
\thanks{*\ Partially supported by The Leverhulme Trust,
International Network Grant \emph{Laplacians, Random Walks, Bose
Gas, Quantum Spin Systems}}
\address{School of Mathematics\\ University of Bristol\\
University Walk\\ Bristol\\ BS8 1TW, U.K.}
 \keywords{Heat content, Euclidean
space} \subjclass[2010]{35A99} \subjclass[2010]{Primary: 35K05;
Secondary: 35K20}
\begin{abstract}
We obtain (i) lower and upper bounds for the heat content of an
open set in $\R^m$ with $R$-smooth boundary and finite Lebesgue
measure, (ii) a necessary and sufficient geometric condition for
finiteness of the heat content in $\R^m$, and corresponding lower
and upper bounds, (iii) lower and upper bounds for the heat loss
of an open set in $\R^m$ with finite Lebesgue measure.
\end{abstract}
\maketitle

\section{Introduction}
In this paper we obtain results for the heat content of an open
and bounded set $D$ in Euclidean space $\R^m,\ m=2,3,\cdots$,
where $D$ has initial temperature $1$ and the complement of $D$
has initial temperature $0$. We denote the fundamental solution of
the heat equation on $\R^m$ by
\begin{equation*}
p(x,y;t)=(4\pi t)^{-m/2}e^{-\vert x - y \vert^{2}/(4t)},
\end{equation*}
and define
\begin{equation}
u_{D}(x;t)=\int_{D} dy \, p(x,y;t). \label{e5}
\end{equation}
It is standard to check (see Chapter 2 in \cite{LCE}) that
\begin{equation}\label{e1}
\Delta u_{D} = \frac{\partial u_{D}}{\partial t}, \quad x \in
\mathbb{R}^{m}, \, t>0,
\end{equation}
and that
\begin{equation}\label{e2}
\lim_{t \downarrow 0}u_D(x;t)= \mathds{1}_{D}(x), \quad x \in
\mathbb{R}^{m}-\partial D,
\end{equation}
where $\partial D$ is the boundary of $D$.

We define the \emph{heat content of $D$ in $\mathbb{R}^{m}$ at
$t$} by
\begin{equation}\label{e3}
H_{D}(t)=\int_{D} dx \, u_{D}(x;t).
\end{equation}
Thus, if $u_{D}(x;t)$ represents the temperature at point $x \in
\mathbb{R}^{m}$ at time $t$ with initial condition \eqref{e2},
then the heat content of $D$ in $\mathbb{R}^{m}$ at time $t$
represents the amount of heat in $D$ at time $t$. By \eqref{e5}
and \eqref{e3} we see that
\begin{equation}\label{e6}
H_{D}(t) = \int_{D} dx \int_{D} dy \, p(x,y;t).
\end{equation}
By the heat semigroup property we have that
\begin{equation}\label{e7}
p(x,y;t)=\int_{\R^m}dz \, p(x,z;t/2)p(z,y;t/2).
\end{equation}
By Tonelli's Theorem and
\eqref{e5},\ \eqref{e6} and \eqref{e7} we
conclude that
\begin{equation*}
H_D(t)=||u_D(\cdot;t/2)||_{L^2(\R^m)}^2.
\end{equation*}
Preunkert \cite{mP03} defines the $L^2$-curve of the set $D$ as
the map $t\mapsto ||u_D(\cdot;t/2)||_{L^2(\R^m)}$. The results for
the $L^2$-curve in Theorem 2.4 of \cite{MPPP07} imply that if $D$
is an open, bounded subset of $\mathbb{R}^{m}$ with
$C^{1,1}$-boundary $\partial D$ then
\begin{equation}\label{e10}
H_{D}(t)=\vert D \vert - \pi^{-1/2}\mathcal{P}(D)t^{1/2} +
o(t^{1/2}),\ t\downarrow 0,
\end{equation}
where $|D|$ denotes the Lebesgue measure of $D$ and
$\mathcal{P}(D)$ denotes the perimeter of $D$. Note that since
$\partial D$ is Lipschitz,
$\mathcal{P}(D)=\mathcal{H}^{m-1}(\partial D)$, the
$(m-1)$-dimensional Hausdorff measure of the boundary (see Remark
(ii) on p.183 in \cite{EG92}).

Initial value problems of the type \eqref{e1}-\eqref{e2} have been
studied in the much wider context of operators of Laplace type on
compact Riemannian manifolds \cite{vdBG}. The results of that
paper imply that if $D$ is open, bounded in $\R^m$ with
$C^{\infty}$ boundary then there exist geometric invariant
$h_0,h_1,\cdots$ such that for any $J\in \N$,

\begin{equation}\label{e10a}
H_D(t)=\sum_{j=0}^Jh_jt^{j/2}+O(t^{(J+1)/2}),\   \, t\downarrow 0.
\end{equation}
Furthermore if $\partial D$ is oriented by a unit inward pointing
normal vector field and if $\{k_1(s),\cdots,k_{m-1}(s)\}$ are the
principal curvatures at $s \in \partial D$ then $h_0=|D|$,
$h_1=-\pi^{-1/2}\int_{\partial D}d\mathcal{H}^{m-1}(s)$, $h_2=0$
and
\begin{equation*}
h_3= \frac{8}{3\sqrt{\pi}}\int_{\partial
D}d\mathcal{H}^{m-1}(s)\left(\frac{5}{32}\left(\sum_{i=1}^{m-1}k_i(s)\right)^2+\frac{1}{16}\sum_{i=1}^{m-1}k_i^2(s)\right).
\end{equation*}
For further results in the Riemannian manifold setting we refer to
\cite{vdBGK} and \cite{vdBG1}.

We note that $p(x,y;t)$ is also the transition density of Brownian
motion $(B(t), t\ge 0, \mathbb{P}_x,x\in \R^m)$ associated to the
Laplacian. In fact $u_D(x;t)=\mathbb{P}_x(B(t)\in D)$, and
$|D|^{-1}H_D(t)$ is precisely this probability averaged over all
starting points with uniform density.

The knowledge of the asymptotic behaviour of $H_D(t),\ t\downarrow
0$ as in \eqref{e10} or \eqref{e10a} does not give any information
about its actual numerical value. Furthermore, due to the
discontinuity of $\mathds{1}_{D}$, the implementation of numerical
schemes is non-trivial in the small $t$ regime.

In this paper, we address this issue and obtain uniform bounds for
$H_D(t)$ under the hypotheses that either $D$ has $R$-smooth
boundary $\partial D$ and finite Lebesgue measure $\vert D \vert$,
or $D$ is an arbitrary open set satisfying a geometrical
integrability condition. These bounds are uniform in both the
geometrical data of $D$ and $t$, and the $m$-dependent constants
are explicit.
\begin{definition}
An open set $D \subset \mathbb{R}^{m}$, $m\geq 2$, has $R$-smooth
boundary if at any point $x_{0} \in \partial D$, there are two
open balls $B_{1}, \, B_{2}$ with radii $R$ such that $B_{1} \subset D$, $B_{2}
\subset \mathbb{R}^{m}-\bar{D}$ and $\bar{B}_{1} \cap
\bar{B}_{2}=\{x_{0}\}$.
\end{definition}

\begin{theorem}\label{T:main}
Let $D \subset \mathbb{R}^{m}$, $m \geq 2$, be an open set with
$R$-smooth boundary $\partial D$ and finite Lebesgue measure
$\vert D \vert$. Then for all $t>0$,
\[\bigg\vert H_{D}(t) - \vert D \vert + \pi^{-1/2}\mathcal{H}^{m-1}(\partial D)t^{1/2}\bigg\vert \leq
m^{3}2^{m+2}\vert D \vert R^{-2}t.\]
\end{theorem}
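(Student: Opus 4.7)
The plan is to work with the identity
\begin{equation*}
|D| - H_D(t) = \int_D \mathbb{P}_x(B(t) \in D^c)\, dx,
\end{equation*}
obtained by subtracting \eqref{e3} from $|D| = \int_D dx\int_{\R^m}p(x,y;t)\, dy$, and to compare the integrand pointwise with the Gaussian half-space probability at the nearest boundary point. For $x \in D$ let $s(x) \in \partial D$ be nearest, $\nu = \nu(s(x))$ the inward unit normal (well defined by $R$-smoothness), and $H(x) = \{y : (y - s(x))\cdot\nu < 0\}$ the exterior half-space. The osculating balls $B_1(x) = B(s(x)+R\nu, R)\subset D$ and $B_2(x) = B(s(x)-R\nu, R)\subset D^c$ supplied by $R$-smoothness yield the chain $B_2(x)\subset D^c\subset B_1(x)^c$ and also $B_2(x)\subset H(x)\subset B_1(x)^c$, hence
\begin{equation*}
\bigl|\mathbb{P}_x(B(t)\in D^c) - \mathbb{P}_x(B(t)\in H(x))\bigr| \leq \mathbb{P}_x\bigl(B(t)\in H(x)\setminus B_2(x)\bigr) + \mathbb{P}_x\bigl(B(t)\in B_1(x)^c\setminus H(x)\bigr).
\end{equation*}

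The principal term $I(t):=\int_D \mathbb{P}_x(B(t)\in H(x))\,dx$ I would evaluate in tubular coordinates on $\{d(x) < R\}$ with $d=\operatorname{dist}(\cdot,\partial D)$: the map $(s,d)\mapsto s+d\nu(s)$ is a diffeomorphism with Jacobian $J(s,d) = \prod_{i=1}^{m-1}(1-d k_i(s))$, and $|k_i|\leq R^{-1}$ forces $|J|\leq 2^{m-1}$. In these coordinates $\mathbb{P}_x(B(t)\in H(x))=\mathcal{N}(-d/\sqrt{2t})$ is a one-dimensional Gaussian tail, and the extension $\int_0^\infty \mathcal{N}(-d/\sqrt{2t})\,dd = \sqrt{t/\pi}$ produces the main term $\pi^{-1/2}\mathcal{H}^{m-1}(\partial D)t^{1/2}$. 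The truncation at $d=R$, the bulk region $\{d(x)\geq R\}$, and the Jacobian residue $|J-1|\leq 2^{m-1}d/R$ generate errors of the form perimeter$\times$Gaussian moment, with $\int_0^\infty d^k\mathcal{N}(-d/\sqrt{2t})\,dd=O(t^{(k+1)/2})$ and the subgaussian estimate $e^{-R^2/(4t)}\leq 4t/(eR^2)$. Perimeter weights are then converted to volume weights via the $R$-smooth isoperimetric inequality $\mathcal{H}^{m-1}(\partial D)\leq m|D|/R$, itself obtained from
\begin{equation*}
|D| \geq \int_0^R \mathcal{H}^{m-1}(\{d=\rho\})\,d\rho \geq \mathcal{H}^{m-1}(\partial D)\int_0^R (1-\rho/R)^{m-1}d\rho = \mathcal{H}^{m-1}(\partial D)R/m.
\end{equation*}
A case split at $t=R^2$ handles any residual $t^{3/2}/R$-type terms: for $t\geq R^2$ the inequality $t^{1/2}/R\leq t/R^2$ degenerates the perimeter term directly into the target form (so even the trivial bound $0\le H_D(t)\le |D|$ suffices), while for $t\leq R^2$ the small-time expansion is sharp.

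The technically delicate step is bounding the lens-shaped mismatches $\mathbb{P}_x(B(t)\in H(x)\setminus B_2(x))$ and $\mathbb{P}_x(B(t)\in B_1(x)^c\setminus H(x))$. In osculating coordinates with $\nu=e_m$, $H\setminus B_2$ is contained in $\{y_m\leq 0,\ |y_\perp|\geq \sqrt{|y_m|(2R-|y_m|)}\}$, so integrating out the $m-1$ transverse Gaussian variables supplies a tangential tail factor bounded by $2t(m-1)/(R|y_m|)$ via Chebyshev; multiplying by the axial Gaussian and integrating in $y_m$ gives a pointwise bound of the form $c_m(t/R^2)\mathcal{N}(-d(x)/\sqrt{2t})$, which on integration over $D$ falls within the same $m^{O(1)}2^m|D|R^{-2}t$ budget as the Jacobian residue. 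Consolidating the $2^{m-1}$ from the Jacobian, the factor of $m$ from summing over $m-1$ principal curvatures, the $m$ from the isoperimetric bound, and universal Gaussian moments produces the stated constant $m^3 2^{m+2}$. I expect the lens-probability estimate to be the main obstacle: it is the only step that genuinely exploits the quadratic tangency of $\partial D$ to the tangent plane at $s(x)$ rather than merely the curvature bound $|k_i|\leq R^{-1}$, and the $m$-dependence must be tracked honestly because neither the half-space nor the ball estimate is individually tight in the critical regime $|y_m|\sim t/R$.
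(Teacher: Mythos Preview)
Your architecture matches the paper's: sandwich $D^c$ between the osculating balls $B_2\subset D^c\subset B_1^c$, extract the half-space (one-dimensional) main term, integrate in the distance variable, and convert perimeter to volume via $\mathcal{H}^{m-1}(\partial D)\le m|D|/R$. The paper organises the integration using the coarea formula together with the level-set bounds
\[
\Bigl(\tfrac{R-r}{R}\Bigr)^{m-1}\mathcal{H}^{m-1}(\partial D)\le \mathcal{H}^{m-1}(\partial D_r)\le \Bigl(\tfrac{R}{R-r}\Bigr)^{m-1}\mathcal{H}^{m-1}(\partial D),
\]
rather than your principal-curvature Jacobian $\prod(1-dk_i)$, but for $C^{1,1}$ boundaries these are equivalent devices.

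The genuine gap is in your lens estimate. Your claimed pointwise bound
$\mathbb{P}_x\bigl(B(t)\in H(x)\setminus B_2(x)\bigr)\le c_m(t/R^2)\,\mathcal{N}(-d(x)/\sqrt{2t})$
is false: at $d(x)=0$ the lens probability is of order $(m-1)\sqrt{t}/R$, not $t/R^2$. Indeed, writing the axial density as $g$ and using Fubini,
\[
\mathbb{P}_x(\text{lens})=\mathbb{E}_{\hat\zeta}\int_0^{|\hat\zeta|^2/R'}g(u+d)\,du\le g(d)\,R^{-1}\mathbb{E}|\hat\zeta|^2=\frac{(m-1)\sqrt{t}}{R\sqrt{\pi}}\,e^{-d^2/(4t)},
\]
and this is sharp at $d=0$. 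Moreover, your proposed derivation via Chebyshev does not even reach this: integrating the bound $\min(1,\,2t(m-1)/(R|y_m|))$ against the axial Gaussian acquires a factor $\log(R/\sqrt t)$ from the $1/|y_m|$ singularity near $y_m=0$, which would not fit into the stated $m^32^{m+2}|D|R^{-2}t$ budget.

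The fix---and this is exactly what the paper does in Lemmas~\ref{L:lowerhc} and~\ref{L:upperhc}---is to abandon the pointwise lens bound and instead integrate over $x$ (equivalently over $d$) \emph{before} bounding the transverse tail. The $d$-integral of the axial Gaussian contributes at most $\tfrac12$, after which Fubini on the remaining $(\theta,\hat\zeta)$ integral gives
\[
\int_0^\infty d\theta\,\mathbb{P}\bigl(|\hat\zeta|^2>\theta R\bigr)=R^{-1}\mathbb{E}|\hat\zeta|^2=\frac{2(m-1)t}{R},
\]
with no logarithm. This yields an integrated lens contribution of order $2^m(m-1)\mathcal{H}^{m-1}(\partial D)R^{-1}t\le 2^m m(m-1)|D|R^{-2}t$, and the remaining bookkeeping then closes as you outlined.
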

Bounds of this type have been obtained in \cite{mvdB87} for the
trace of the Dirichlet heat semigroup and in \cite{vdBD89} for the
heat content with Dirichlet boundary conditions respectively.

Examples of open sets with infinite volume but with finite heat
content in $\R^m$ for any $t>0$ have been given in Theorem 4 of
\cite{mvdB13}. The mechanism for that phenomenon is very different
from the one where one imposes Dirichlet cooling conditions on the
boundary.  Efficient cooling takes place if the boundary is not
too thin. The latter condition can be phrased in terms of a
capacitary density condition for the boundary or a strong Hardy
condition for the quadratic form associated to the Dirichlet
Laplacian. For further details we refer to
\cite{vdB05,vdB07,vdBG04} and \cite{vdBGGK}. In the setting of
\eqref{e1}, \eqref{e2} and \eqref{e3} above, the mechanism for
efficient heat loss is that the complement is not too small. With
this in mind, we introduce the following.

\begin{definition}\label{def2} For an open set $D \subset \R^m$, $R>0$ and
$x\in D$ we define
\begin{equation*}
\mu_D(x;R)=|B(x;R)\cap D|,
\end{equation*}
where $B(x;R) = \{y \in \R^{m} : \vert x - y \vert < R\}$.
\end{definition}
Our second result is the following.

\begin{theorem}\label{the2} Let $D$ be an open set in $\R^m$, and let
$t>0$. Then

\noindent \textup{(i)} $H_D(t)<\infty$ if and only if
$\int_Ddx \, \mu_D(x;(8mt)^{1/2})<\infty$. If
$\int_Ddx \, \mu_D(x;(8mt)^{1/2})<\infty$ then there exist constants
$c_1(m)$ and $c_2(m)$ such that
\begin{equation}\label{e10d}
c_1(m)t^{-m/2}\int_Ddx \, \mu_D(x;(8mt)^{1/2})\le H_D(t)\le
c_2(m)t^{-m/2}\int_D \, dx \, \mu_D(x;(8mt)^{1/2}),
\end{equation}
where
\begin{equation*}
c_1(m)=e^{-2m}(4\pi)^{-m/2},
\end{equation*}
and
\begin{equation*}
c_2(m)=\left(1-2^me^{-3m/2}\right)^{-1}(4\pi)^{-m/2}.
\end{equation*}
\noindent \textup{(ii)} If $0<t_2\le t_1$ and if $H_D(t_1)<\infty$
then
\begin{equation}\label{e10g}
H_D(t_2)\le
\frac{c_2(m)}{c_1(m)}\left(\frac{t_1}{t_2}\right)^{m/2}H_D(t_1).
\end{equation}
\end{theorem}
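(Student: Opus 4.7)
The plan is to compare the Gaussian kernel at scale $t$ with $\mathds{1}_{|x-y|<R}$ at the scale $R=(8mt)^{1/2}$, chosen so that $R^2/(4t)=2m$. Writing
\begin{equation*}
H_D(t)=(4\pi t)^{-m/2}\int_D\int_D e^{-|x-y|^2/(4t)}\,dx\,dy
\end{equation*}
and using $\int_D\int_D\mathds{1}_{|x-y|<R}\,dx\,dy=\int_D\mu_D(x;R)\,dx$ by Fubini, the lower bound in (i) is immediate: on $\{|x-y|<R\}$ we have $e^{-|x-y|^2/(4t)}\ge e^{-2m}$, so discarding the complement yields $H_D(t)\ge c_1(m)t^{-m/2}\int_D\mu_D(x;R)\,dx$, which in particular establishes the ``$\int_D\mu_D\,dx<\infty$ whenever $H_D(t)<\infty$'' direction of the equivalence.

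The upper bound is where the constant $c_2(m)$ appears, and here the key trick is an interpolation in the Gaussian exponent. I would split at $|x-y|=R$: the near part is bounded by $\int_D\mu_D(x;R)\,dx$ using $e^{-\cdot}\le 1$. On the far set $\{|x-y|\ge R\}$, writing $|x-y|^2\ge\beta R^2+(1-\beta)|x-y|^2$ for $\beta\in(0,1)$ gives
\begin{equation*}
e^{-|x-y|^2/(4t)}\le e^{-2m\beta}\,e^{-|x-y|^2/(4t/(1-\beta))},
\end{equation*}
so after dropping the indicator and recognising the remaining integrand as the heat kernel at time $t/(1-\beta)$,
\begin{equation*}
(4\pi t)^{-m/2}\int_D\int_D\mathds{1}_{|x-y|\ge R}\,e^{-|x-y|^2/(4t)}\,dx\,dy\le e^{-2m\beta}(1-\beta)^{-m/2}H_D\bigl(t/(1-\beta)\bigr).
\end{equation*}
Since $H_D(t)=\|e^{(t/2)\Delta}\mathds{1}_D\|_{L^2(\R^m)}^2$ is non-increasing in $t$ by contractivity of the heat semigroup on $L^2$, we may replace $H_D(t/(1-\beta))$ by $H_D(t)$. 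Optimising $e^{-2m\beta}(1-\beta)^{-m/2}$ over $\beta\in(0,1)$ gives the unique minimum at $\beta=3/4$, yielding $4^{m/2}e^{-3m/2}=2^m e^{-3m/2}$. Thus the far part is at most $2^m e^{-3m/2}H_D(t)$, and combining with the near part gives $H_D(t)\le(4\pi t)^{-m/2}\int_D\mu_D(x;R)\,dx+2^m e^{-3m/2}H_D(t)$; rearranging produces the upper bound of (i) with $c_2(m)=(1-2^m e^{-3m/2})^{-1}(4\pi)^{-m/2}$, which also completes the ``iff''.

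Part (ii) is then an immediate corollary of the two bounds in (i) and the monotonicity $r\mapsto\mu_D(x;r)$: for $0<t_2\le t_1$ the upper bound of (i) at $t_2$, the monotonicity at radius $(8mt_2)^{1/2}\le(8mt_1)^{1/2}$, and the lower bound of (i) at $t_1$ chain together to give
\begin{equation*}
H_D(t_2)\le c_2(m)t_2^{-m/2}\int_D\mu_D(x;(8mt_1)^{1/2})\,dx\le\frac{c_2(m)}{c_1(m)}\Bigl(\frac{t_1}{t_2}\Bigr)^{m/2}H_D(t_1),
\end{equation*}
which is \eqref{e10g}. The essential obstacle I foresee is spotting the interpolation step in the upper bound of (i) and identifying that $\beta=3/4$ is the sharp choice producing the advertised factor $2^m e^{-3m/2}$; once this observation is in hand, the monotonicity of $H_D$ and the algebraic rearrangement are routine.
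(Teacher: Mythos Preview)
Your argument mirrors the paper's almost exactly: same near/far split at $R=(8mt)^{1/2}$, same Gaussian interpolation (your $\beta$ is their $\alpha$), same choice $\beta=3/4$, same use of the monotonicity of $t\mapsto H_D(t)$, and the same chaining for part~(ii). There is, however, one genuine gap in your upper bound for (i).

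Your rearrangement step takes
\[
H_D(t)\le (4\pi t)^{-m/2}\int_D\mu_D(x;R)\,dx+2^m e^{-3m/2}H_D(t)
\]
and subtracts $2^m e^{-3m/2}H_D(t)$ from both sides. This is only legitimate if $H_D(t)<\infty$, which is precisely part of what you are trying to prove; when $H_D(t)=\infty$ the inequality is the vacuous $\infty\le\infty$ and yields nothing. So as written, your argument does not establish the implication ``$\int_D\mu_D\,dx<\infty\Rightarrow H_D(t)<\infty$'', and hence not the upper bound in \eqref{e10d} either. The paper handles this by truncating: set $D_n=D\cap B(0;n)$, run your entire argument with $D_n$ in place of $D$ (where $H_{D_n}(t)\le|B(0;n)|<\infty$ makes the rearrangement valid), obtain $H_{D_n}(t)\le c_2(m)t^{-m/2}\int_D\mu_D(x;(8mt)^{1/2})\,dx$, and then let $n\to\infty$ using the Monotone Convergence Theorem to conclude for $D$. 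With that one additional step your proof is complete and coincides with the paper's.
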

Since $t\mapsto H_D(t)$ is monotonically decreasing (see
\cite{mP03}), (ii) implies that if $H_D(t)$ is finite for some
$t>0$ then it is finite for all $t>0$. This is in contrast with
the situation where the boundary is kept at fixed temperature $0$.
See for example Theorem 5.5 in \cite{vdBD89}.

While Theorem \ref{the2} holds in the case where $D$ has finite Lebesgue measure,
we have the trivial upper bound
\begin{equation}\label{e10g1}
H_D(t)\le |D|
\end{equation}
in that case, which is sharper than the upper bound in
\eqref{e10d}. Similarly we have by Proposition 9(i) in \cite{mvdB13} that
\begin{equation}\label{e10g2}
H_D(t)\ge |D|-2^{m/2}\int_Ddx \, e^{-\delta(x)^2/(8t)},
\end{equation}
where
$$\delta(x) = \min \{ \vert x - y \vert : y \in \R^{m} -
D\}.$$
The latter implies $\liminf_{t\downarrow 0} H_D(t)=|D|,$
which is sharper than the lower bound in \eqref{e10d}. So Theorem
\ref{the2} is of interest only in the case where $D$ has infinite
Lebesgue measure.

The choice $R=(8mt)^{1/2}$ in the application of Definition
\ref{def2} in Theorem \ref{the2} is natural since diffusion or
transport of heat for small $t$ takes place on a timescale
$t^{1/2}$. However, the constant $(8m)^{1/2}$ is somewhat
arbitrary: any sufficiently large numerical constant would
suffice. Of course the choice of constant affects the numerical
values of $c_1(m)$ and $c_2(m)$ in Theorem \ref{the2}.

We recall the examples in Theorem 4 of \cite
{mvdB13} with finite heat content in $\R^m$ and with infinite volume.
Let $\Sigma$ be an open, bounded convex set in $\R^{m-1}$ and let
\begin{equation}\label{e10h}
\Omega(\alpha, \Sigma)=\{(x,x')\in \R^m:x>1, x'\in
x^{-\alpha}\Sigma\},
\end{equation}
where $\alpha$ is a fixed positive constant. Then $|\Omega(\alpha,
\Sigma)|=+\infty$ if and only if $\alpha<(m-1)^{-1}$. We have the
following.
\begin{proposition}\label{prop} Let $\Omega(\alpha, \Sigma)$ as in \eqref{e10h}
and let $t>0$. Then
\begin{equation}\label{e10i}
H_{\Omega(\alpha, \Sigma)}(t)<\infty
\Leftrightarrow\alpha>(2(m-1))^{-1}.
\end{equation}
If $(2(m-1))^{-1}<\alpha<(m-1)^{-1}$ then
\begin{equation}\label{e10j}
H_{\Omega(\alpha, \Sigma)}(t)\asymp t^{((m-1)\alpha-1)/(2\alpha)},
 \, t\downarrow 0.
\end{equation}
\end{proposition}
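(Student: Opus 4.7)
The plan is to apply Theorem \ref{the2}. With $r = (8mt)^{1/2}$ and
\[ I(r) := \int_{\Omega(\alpha,\Sigma)} \mu_\Omega(p;r)\, dp, \]
Theorem \ref{the2} asserts that $H_\Omega(t)<\infty$ if and only if $I(r)<\infty$, and in the finite case $H_\Omega(t) \asymp t^{-m/2} I(r)$ up to $m$-dimensional constants. It therefore suffices to determine the finiteness of $I(r)$ and, when $I(r)<\infty$, its order as $r \downarrow 0$.

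Parametrising $p = (x, x') \in \Omega$ with $x > 1$ and $x' \in x^{-\alpha}\Sigma$, I would exploit that the cross-section of $\Omega$ at height $y$ is $y^{-\alpha}\Sigma$, of $(m-1)$-volume $|\Sigma| y^{-\alpha(m-1)}$ and diameter of order $y^{-\alpha}$. Comparison with $r$ splits the base point $x$ into two regimes separated by $x \sim r^{-1/\alpha}$. In the \emph{thin regime} $x \gtrsim r^{-1/\alpha}$, the cross-sections at heights $y \in [x-r, x+r]$ have diameter much smaller than $r$ and are entirely captured by the transverse disk of the ball, so
\[ \mu_\Omega(p;r) \asymp \int_{x-r}^{x+r} |\Sigma|\, y^{-\alpha(m-1)}\, dy \asymp r\, x^{-\alpha(m-1)}. \]
In the \emph{thick regime} $x \lesssim r^{-1/\alpha}$, the cross-section is much larger than $r$; for $p$ whose transverse coordinate $x'$ lies in the $(r/2)$-interior of $x^{-\alpha}\Sigma$, the ball $B(p;r/2)$ is contained in $\Omega$, giving $\mu_\Omega(p;r) \asymp r^m$, while the trivial bound $\mu_\Omega(p;r) \le \omega_m r^m$ applies always.

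Integrating over $\Omega$, the thin contribution is
\[ r\, |\Sigma| \int_{r^{-1/\alpha}}^\infty x^{-2\alpha(m-1)}\, dx, \]
which is finite if and only if $2\alpha(m-1) > 1$, i.e.\ $\alpha > 1/(2(m-1))$; this gives (i). When finite, it equals $\asymp r^{2m-1-1/\alpha}$. The thick contribution, using $\alpha(m-1) < 1$ throughout the infinite-volume range $\alpha < 1/(m-1)$, is
\[ r^m \int_1^{r^{-1/\alpha}} |\Sigma|\, x^{-\alpha(m-1)}\, dx \asymp r^{2m-1-1/\alpha}, \]
matching the first. Hence $I(r) \asymp r^{2m-1-1/\alpha}$, and plugging $r \asymp t^{1/2}$ gives $H_\Omega(t) \asymp t^{-m/2} \cdot t^{(2m-1-1/\alpha)/2} = t^{((m-1)\alpha-1)/(2\alpha)}$, which is (ii).

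The main obstacle is the lower bound on $\mu_\Omega(p;r)$ in the thick regime, which fails for $p$ close to the transverse boundary of $\Omega$. I would handle this by restricting the lower estimate to the sub-region of $\Omega$ where $x'$ lies in the $\rho$-interior of $x^{-\alpha}\Sigma$ for $\rho \sim r$. Convexity of $\Sigma$ yields a Steiner-type estimate $|K \setminus K_\rho| \lesssim \rho\, \mathcal{H}^{m-2}(\partial K)$, so that $|(x^{-\alpha}\Sigma)_\rho| \ge c\, |x^{-\alpha}\Sigma|$ for a constant $c = c(\Sigma) > 0$ whenever $x \lesssim r^{-1/\alpha}$; since $y^{-\alpha}\Sigma$ varies slowly in $y$ over $|y-x| < r$ in the thick regime, this suffices to recover $\mu_\Omega(p;r) \gtrsim r^m$ on a set of positive fraction of each cross-section, and the loss is absorbed into the $\asymp$ constants.
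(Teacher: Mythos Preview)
Your proposal is correct and follows the same strategy as the paper: apply Theorem~\ref{the2} and estimate $\int_{\Omega}\mu_{\Omega}(\cdot;(8mt)^{1/2})$ by splitting at $x\asymp t^{-1/(2\alpha)}$, with the trivial bound $\mu_\Omega\le \omega_m(8mt)^{m/2}$ in the thick regime and a cylinder-in-ball comparison in the thin regime. The one economy you miss is that the thin regime alone already furnishes both the divergence for $\alpha\le(2(m-1))^{-1}$ and a lower bound of the correct order for $\alpha>(2(m-1))^{-1}$; the paper therefore never needs a lower bound on $\mu_\Omega$ in the thick regime and avoids the boundary-layer/Steiner argument entirely.
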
 The precise asymptotic behaviour of $H_{\Omega(\alpha,
\Sigma)}(t)$ was obtained in Theorem 4 (ii) of \cite{mvdB13}.
However, the range of $\alpha$ for which we shall prove
\eqref{e10i} and \eqref{e10j} was incorrectly stated in parts (i)
and (ii) of that theorem.

If $D$ has finite Lebesgue measure then it is convenient to define the {\it heat loss of $D$ in $\R^m$ at $t$} by
\begin{equation}\label{e10k}
F_D(t)=|D|-H_D(t).
\end{equation}
We see that \eqref{e10g1} and \eqref{e10g2} imply that
\begin{equation*}
0\le F_D(t)\le 2^{m/2}\int_Ddx\ e^{-\delta(x)^2/(8t)}.
\end{equation*}
Moreover, $t\mapsto F_D(t)$ is strictly increasing.
In Theorem \ref{the3} below we identify the quantity which controls the heat loss of $D$ in $\R^m$ at $t$, and which motivates the following.
\begin{definition}\label{def3} For an open set $D \subset \R^m$ with finite Lebesgue measure, $R>0$, and
$x\in D$ we define
\begin{equation*}
\nu_D(x;R)=|B(x;R)\cap (\R^m-D)|.
\end{equation*}
\end{definition}
\begin{theorem}\label{the3} If $D$ is an open set in $\R^m$ with finite Lebesgue measure then there exist constants $d_1(m)$ and $d_2(m)$ such that
\begin{equation}\label{e10n}
d_1(m)t^{-m/2}\int_Ddx \, \nu_D(x;4(mt)^{1/2})\le F_D(t)\le d_2(m)t^{-m/2}\int_Ddx \, \nu_D(x;4(mt)^{1/2}),
\end{equation}
where
\begin{equation*}
d_1(m)=e^{-4m}(4\pi)^{-m/2},
\end{equation*}
and
\begin{equation*}
d_2(m)=\left(1-2^{(m+2)/2}e^{-2m}\right)^{-1}(4\pi)^{-m/2}.
\end{equation*}
\end{theorem}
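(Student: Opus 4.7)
The plan is to split the integral $F_D(t)=\int_D dx\int_{\R^m-D}dy\,p(x,y;t)$ according to whether $|x-y|<R$ or $|x-y|\ge R$, with $R=4(mt)^{1/2}$, in the same spirit as the argument that yields Theorem~\ref{the2}. The lower bound in \eqref{e10n} is then immediate: restricting the $y$-integral to $B(x;R)\cap(\R^m-D)$ and using $p(x,y;t)\ge(4\pi t)^{-m/2}e^{-R^2/(4t)}=(4\pi t)^{-m/2}e^{-4m}$ whenever $|x-y|<R$ gives
\[
F_D(t)\ge (4\pi t)^{-m/2}e^{-4m}\int_D \nu_D(x;R)\,dx,
\]
which is the stated lower bound with constant $d_1(m)$.

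For the upper bound the close contribution is at most $(4\pi t)^{-m/2}\int_D\nu_D(x;R)\,dx$, and for the far contribution I would use the pointwise comparison
\[
p(x,y;t)=2^{m/2}e^{-|x-y|^2/(8t)}\,p(x,y;2t),
\]
which on $|x-y|\ge R$ gives $p(x,y;t)\le 2^{m/2}e^{-2m}p(x,y;2t)$; integrating over $D\times(\R^m-D)$ yields a far-part bound of $2^{m/2}e^{-2m}F_D(2t)$.

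The step I expect to be the main obstacle --- and the one where the argument genuinely departs from the proof of Theorem~\ref{the2} --- is bounding $F_D(2t)$ by a multiple of $F_D(t)$: since $t\mapsto F_D(t)$ is increasing this cannot be done by monotonicity. The key observation is the Chapman--Kolmogorov identity
\[
F_D(2t)=\int_{\R^m}dz\,u_D(z;t)\,u_{\R^m-D}(z;t),
\]
together with $0\le u_D,u_{\R^m-D}\le 1$ and $u_D+u_{\R^m-D}\equiv 1$: the integrand is bounded by $u_{\R^m-D}(z;t)$ on $D$ and by $u_D(z;t)$ on $\R^m-D$, so by Fubini each of the resulting two pieces equals $F_D(t)$ and hence $F_D(2t)\le 2F_D(t)$. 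This upgrades the far-part bound to $F_D^{\mathrm{far}}(t)\le 2^{(m+2)/2}e^{-2m}F_D(t)$, and combining with the close estimate yields
\[
F_D(t)\le (4\pi t)^{-m/2}\int_D\nu_D(x;R)\,dx+2^{(m+2)/2}e^{-2m}F_D(t).
\]
Since $|D|<\infty$ forces $F_D(t)\le|D|<\infty$, transposing the last term produces the upper bound with the constant $d_2(m)$; the positivity $1-2^{(m+2)/2}e^{-2m}>0$ needed for this is easily checked for all $m\ge 2$, so the argument closes.
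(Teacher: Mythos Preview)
Your proof is correct and follows essentially the same route as the paper: the lower bound, the near/far splitting with $R=4(mt)^{1/2}$, the pointwise comparison $p(x,y;t)=2^{m/2}e^{-|x-y|^2/(8t)}p(x,y;2t)$, and the rearrangement using $F_D(t)\le|D|<\infty$ all match the paper's argument. The only cosmetic difference is that the paper isolates the subadditivity $F_D(s+t)\le F_D(s)+F_D(t)$ as a separate Proposition (proved by the same Chapman--Kolmogorov split over $z\in D$ versus $z\in\R^m-D$) and then specialises to $s=t$, whereas you prove the needed doubling inequality $F_D(2t)\le 2F_D(t)$ directly.
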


We conclude this section with the following.
\begin{proposition}\label{L:prop0}
Let $D$ be an open set in $\R^{m}$ with finite Lebesgue measure,
and let $1\le p<\infty$. Then \eqref{e2} implies
$\lim_{t\downarrow 0}\lVert u_D(\cdot;t)-
\mathds{1}_{D}(\cdot)\rVert_{L^p(\R^m)}=0$.
\end{proposition}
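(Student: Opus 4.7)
The plan is to first handle $p=1$ and then bootstrap to arbitrary $p\in[1,\infty)$. The key observation is a conservation identity for $u_D(\cdot;t)$: applying Tonelli's theorem to \eqref{e5} together with the fact that $\int_{\R^m}p(x,y;t)\,dx=1$ yields $\|u_D(\cdot;t)\|_{L^1(\R^m)}=|D|=\|\mathds{1}_D\|_{L^1(\R^m)}$. Since $0\le u_D(\cdot;t)\le 1$ pointwise, the difference splits as
\begin{equation*}
\|u_D(\cdot;t)-\mathds{1}_D\|_{L^1(\R^m)}=\int_D(1-u_D(x;t))\,dx+\int_{\R^m\setminus D}u_D(x;t)\,dx,
\end{equation*}
and the conservation identity gives $\int_{\R^m\setminus D}u_D(x;t)\,dx=|D|-H_D(t)=\int_D(1-u_D(x;t))\,dx$, so both integrals equal the heat loss $F_D(t)$ defined by \eqref{e10k}. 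Hence $\|u_D(\cdot;t)-\mathds{1}_D\|_{L^1(\R^m)}=2F_D(t)$, and convergence in $L^1$ reduces to showing $F_D(t)\to 0$ as $t\downarrow 0$.

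For the latter I would invoke dominated convergence. Since $D$ is open, it is disjoint from $\partial D$, so \eqref{e2} applies at every $x\in D$ and yields $1-u_D(x;t)\to 0$ as $t\downarrow 0$. The integrand $1-u_D(x;t)$ is bounded pointwise by the constant function $1$, which is integrable on $D$ because $|D|<\infty$. Dominated convergence therefore gives $F_D(t)=\int_D(1-u_D(x;t))\,dx\to 0$, establishing the statement for $p=1$.

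For general $p\in[1,\infty)$, set $f(x)=u_D(x;t)-\mathds{1}_D(x)$. Since $|f|\le 1$ pointwise, $|f|^p\le |f|$, and
\begin{equation*}
\|u_D(\cdot;t)-\mathds{1}_D\|_{L^p(\R^m)}^p\le\|u_D(\cdot;t)-\mathds{1}_D\|_{L^1(\R^m)}=2F_D(t)\to 0,
\end{equation*}
which completes the argument. There is no serious obstacle here: the content lies in identifying the global $L^1$-error with $2F_D(t)$, after which the finiteness of $|D|$ and the openness of $D$ (which ensures $D\cap\partial D=\emptyset$, so that \eqref{e2} is available at every point of $D$) let a routine dominated convergence argument finish the proof. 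Note that no assumption on $|\partial D|$ is needed, because the reduction to $\int_D(1-u_D)\,dx$ sidesteps any potential issues on the boundary.
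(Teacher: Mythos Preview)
Your proof is correct and follows essentially the same route as the paper: the paper also bounds $|u_D-\mathds{1}_D|^p$ by $|u_D-\mathds{1}_D|$ using $|u_D-\mathds{1}_D|\le 1$, uses the mass identity $\int_{\R^m}u_D(\cdot;t)=|D|$ to rewrite $\int_{\R^m-D}u_D$ as $\int_D(1-u_D)$, and then applies dominated convergence on $D$. The only difference is cosmetic---the paper folds the $p=1$ step and the reduction from general $p$ into a single chain of inequalities, whereas you separate them and name the quantity $2F_D(t)$ explicitly.
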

\begin{proof} We note that $0<u_D(x;t)\le1$. Hence $|u_D(x;t) - 1|\le 1$, and
\begin{align*}
\lVert u_D(\cdot;t)-
\mathds{1}_{D}(\cdot)\rVert_{L^p(\R^m)}^p&=\int_{D}dx \,
|u_D(x;t)-1|^p+ \int_{\R^{m} - D}dx \, u_D(x;t)^p
\\ & \le \int_Ddx \, |u_D(x;t)-1|+\int_{\R^m-D}dx \, u_D(x;t) \\ &
=\int_Ddx \, |u_D(x;t)-1|+\int_{\R^m}dx \, u_D(x;t)-\int_Ddx \,
u_D(x;t)
\\& =2\int_Ddx \, |u_D(x;t)-1|.
\end{align*}
The assertion follows by Lebesgue's Dominated Convergence
Theorem and \eqref{e2}.
\end{proof}

This paper is organised as follows. In Section \ref{sec2} we state
and prove some geometric facts for open sets with $R$-smooth
boundary and finite Lebesgue measure (Proposition \ref{L:prop1}).
We defer the proof of Theorem~\ref{T:main} to Section \ref{sec3},
where we give lower and upper bounds for $u_{D}(x;t)$
(Lemma~\ref{L:lowersol}, Lemma~\ref{L:uppersol}). We then
integrate these bounds with respect to $x$ to obtain lower and
upper bounds for $H_{D}(t)$ (Lemma~\ref{L:lowerhc},
Lemma~\ref{L:upperhc}) which imply Theorem~\ref{T:main} as
advertised. We defer the proofs of Theorem \ref{the2} and of
Theorem \ref{the3} to Section \ref{sec4} respectively. There we
will also prove Proposition \ref{prop}.

\section{Geometrical results for open sets with $R$-smooth boundary and finite Lebesgue
measure}\label{sec2} In what follows, $\omega_{m}$ denotes the
volume of a unit ball $B(x;1)$ in $\R^{m}$ and we define
\begin{equation*}
\diam(D)=\sup\{|x-y|:x\in D, y\in D\},
\end{equation*}
\begin{equation*}
D_{r}=\{x \in D : \delta(x)>r\}
\end{equation*}
and, for $\epsilon > 0$,
\begin{equation*}
D(\epsilon) = \cap_{\{r \, : \, r<\epsilon\}} D_{r} =\{x\in D :
\delta (x) \geq \epsilon\}.
\end{equation*}

\begin{proposition}\label{L:prop1}
If $D \subset \mathbb{R}^{m}$, $m\geq 2$ is an open set with
$R$-smooth boundary $\partial D$ and finite Lebesgue measure
$\vert D \vert$, then:\\
\noindent \textup{(i)} $D$ has at most $\omega_m^{-1}R^{-m}|D|$
components.\\
\noindent \textup{(ii)} If $D^{i}$ is a component of $D$, then
\begin{equation}\label{e11}
\diam (D^{i}) \leq
\frac{\vert D^{i} \vert + (2\omega_{m-1}-\omega_{m})R^{m}}{\omega_{m-1}R^{m-1}},
\end{equation}
with equality if $D^{i}$ is an $R$-neighbourhood of a straight line segment.\\
\noindent \textup{(iii)} $D$ is a $C^{1,1}$ domain.\\
\noindent \textup{(iv)} $\Hdim^{m-1}(\partial D)=\mathcal{P}(D).$ \\
\noindent \textup{(v)} For $0 < r < R$,
\begin{equation}
\Hdim^{m-1}(\partial D^{i})\left(\frac{R-r}{R}\right)^{m-1} \leq
\Hdim^{m-1}(\partial (D^{i}_{r})) \leq \Hdim^{m-1}(\partial
D^{i})\left(\frac{R}{R-r}\right)^{m-1}, \label{e18}
\end{equation}
and
\begin{equation}\label{e19b}
\Hdim^{m-1}(\partial D)\le \frac{m|D|}{R}.
\end{equation}
\end{proposition}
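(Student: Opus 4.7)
The plan is to treat the five parts in the order (i), (iii), (iv), (v), (ii), reserving (ii) for last since it is the main technical obstacle.

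For (i), every component $D^i$ has non-empty boundary (otherwise $D^i=\R^m$, contradicting $|D|<\infty$); picking $x_0\in\partial D^i$ and invoking $R$-smoothness furnishes an inner ball of radius $R$ contained in $D^i$. Inner balls from distinct components are pairwise disjoint, so summing their volumes gives the bound $\omega_m^{-1}R^{-m}|D|$ on the number of components. Parts (iii) and (iv) are standard consequences: the two-sided $R$-ball condition classically characterises $C^{1,1}$ boundary regularity, and $\mathcal{P}(D)=\Hdim^{m-1}(\partial D)$ holds for Lipschitz boundaries by the cited remark in \cite{EG92}.

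For (v), the two-sided ball condition bounds the principal curvatures by $|k_i(x)|\le R^{-1}$. The parallel map $x\mapsto x+rn(x)$ (with $n$ the inner unit normal) has Jacobian $\prod_{i=1}^{m-1}(1-rk_i(x))$, which for $0<r<R$ lies in the interval $[((R-r)/R)^{m-1},((R+r)/R)^{m-1}]$; since $(R+r)/R\le R/(R-r)$, integrating over $\partial D^i$ yields both inequalities in \eqref{e18}. The bound \eqref{e19b} follows from the coarea formula $|D^i|\ge\int_0^R\Hdim^{m-1}(\partial D^i_r)\,dr$ combined with the first inequality of \eqref{e18} and the elementary integral $\int_0^R((R-r)/R)^{m-1}\,dr=R/m$, giving $\Hdim^{m-1}(\partial D^i)\le m|D^i|/R$; summing over components completes the proof.

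Part (ii) is the main obstacle. First one observes that $R$-smoothness with $|D^i|<\infty$ forces $D^i$ to be bounded, since an unbounded sequence of boundary points would furnish infinitely many disjoint inner balls of radius $R$. By compactness of $\overline{D^i}$, there exist $x_1,x_2\in\partial D^i$ realising the diameter $L$; extremality forces the inner unit normals at $x_1$ and $x_2$ to be $v:=(x_2-x_1)/L$ and $-v$ respectively. Projecting onto the $v$-line and letting $A(s)$ denote the $(m-1)$-measure of the slice $D^i\cap\{x\cdot v=s\}$ at height $s\in[a,b]$ (with $a=x_1\cdot v$, $b=x_2\cdot v$, so $b-a=L$), the inner balls at $x_1,x_2$ yield the spherical-cap bounds $A(s)\ge\omega_{m-1}(R^2-(s-a-R)^2)^{(m-1)/2}$ on $[a,a+R]$ and the symmetric one on $[b-R,b]$, whose integrals sum to precisely $\omega_m R^m$. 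Integrating these together with a middle-slice estimate $A(s)\ge\omega_{m-1}R^{m-1}$ for $s\in[a+R,b-R]$ then gives $|D^i|\ge\omega_m R^m+\omega_{m-1}R^{m-1}(L-2R)$, which rearranges to \eqref{e11}; the equality case is precisely an $R$-neighbourhood of a straight segment, confirming sharpness. The hardest step is the middle-slice estimate, which amounts to exhibiting, for each such $s$, a point $y\in D^i$ at height $y\cdot v=s$ with $\delta(y)\ge R$. I expect this to follow from a connectedness/continuity argument tracking the centres of inner balls along the axis $v$, using the two-sided ball condition to rule out any pinching of the parallel body $D^i_R$ below radius $R$.
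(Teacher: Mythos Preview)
Your approach is essentially the same as the paper's, including the slicing argument for (ii) with the half-ball end-caps and the $\omega_{m-1}R^{m-1}$ middle-slice bound. The one step you leave as an expectation is exactly what the paper supplies: for each boundary point $z$ the centre $z_R$ of the inner $R$-ball depends continuously on $z$, so the image of the path-connected set $\partial D^i$ under $z\mapsto z_R$ is path-connected; hence $D^i(R)=\{x\in D^i:\delta(x)\ge R\}$ is path-connected, and any path in $D^i(R)$ joining the inner-ball centres $p_R=x_1+Rv$ and $q_R=x_2-Rv$ must meet every intermediate hyperplane $\{y\cdot v=s\}$ for $s\in[a+R,b-R]$, furnishing the required point $y$ with $\delta(y)\ge R$ in that slice.
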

\begin{proof}
\noindent (i) Let $D=\cup_{\{i \in I\}} D^{i}$ where $i \in I$ and $I$ is some
indexing set. Since the components of $D$ are open, $I$ is at most
countable. Let $n(I)$ denote the number of elements in $I$, i.e.
the number of components of $D$. By the $R$-smoothness of
$\partial D$, we have that for each $x \in \partial D$, there is a
ball $B_{1}$ of radius $R$ such that $x \in \bar{B}_{1}$ and
$B_{1} \subset D^{i}$ for some $i \in I$. From this we deduce that
$\vert D^{i} \vert \geq \vert B_{1} \vert =\omega_{m} R^{m}.$ So
\[\vert D \vert = \sum_{i \in I} \vert D^{i} \vert \geq \sum_{i \in I}\omega_{m} R^{m} = n(I)\omega_{m}R^{m}\]
which implies the required estimate.\\

\noindent (ii) We now consider an arbitrary component and denote it by $D$. Since
$D$ is open and connected in $\mathbb{R}^m$, $D$ is path
connected. Let $x_0\in D(R)$ be arbitrary. If $D$ is not bounded
then there exists a path $\gamma:[0,\infty)\rightarrow D$ with
$\gamma(0)=x_0$ and $\lim_{t\rightarrow \infty}\vert \gamma(t)-x_0
\vert = +\infty$. Let $x_n\in \gamma([0,\infty)),\ n\in \N$ be
such that
\begin{equation*}
\vert x_n - x_0 \vert= 4nR.
\end{equation*}
By the triangle inequality
\begin{equation}\label{e15}
\vert x_n-x_k \vert \geq 4\vert n-k \vert R.
\end{equation}
If $B(x_j;R)$ is not contained in $D$, then let $z_j$ be a point
of $\partial D$ such that $\vert z_j-x_j \vert = \delta(x_j) < R$.
By the $R$-smoothness of $\partial D$, there exists a point $y_j
\in D$ such that $B(y_j;R)\subset D$ and $\vert z_j - y_j \vert
=R$. Then $\vert y_j - x_j \vert <R$. If $B(x_j;R)$ is contained
in $D$ then we put $y_j=x_j$. This defines a sequence of points
$(y_j)$ in $D$ such that $\vert y_j - x_j \vert <R$. By
\eqref{e15} and the triangle inequality we have that, for $n \neq
k$
\begin{equation*}
\vert y_n - y_k\vert \geq \vert \vert x_n - x_k \vert - \vert y_n
- x_n \vert- \vert y_k - x_k\vert \vert \geq \vert 4 \vert n-k \vert
R-2R \vert \geq 2 \vert n-k \vert R.
\end{equation*}
We conclude that open balls $\{B(y_n;R)$, $n\in \mathbb{N}\}$ are pairwise
disjoint and contained in $D$. Hence for any $N\in \mathbb{N}$,
$D\supset\cup_{n=1}^NB(y_n;R)$. So $\vert D \vert \geq N\omega_{m}R^m$, which is
a contradiction for $N$ sufficiently large. Therefore $D$ is bounded.

For any point $z\in\partial D$, let $z_R$ be such that $\vert z -
z_R \vert =R$ and $B(z_R;R)\subset D$. The map $z\mapsto z_R$ is
continuous. Since $\partial D$ is path connected, its image under
$z\mapsto z_R$ is path connected. We conclude that both $\partial
D(R)$ and $D(R)$ are path connected. Let $p$ and $q$ be points in
$\partial D$ such that $\diam (D)=\vert p - q \vert$. Let
$\varphi:[0,\infty)\rightarrow D(R)$ be a path from $p_R$ to
$q_R$. Let $P_{\xi},\ 0 \leq \xi \leq \diam (D)$, be the plane
perpendicular to the line segment $[p,q]$ such that it intersects
this line segment at a point which has distance $\xi$ to $p$. For
$R \leq \xi \leq \diam (D)-R$, $P_{\xi}$ intersects $\varphi$ in a
point which belongs to $D(R)$. Hence for $R \leq \xi \leq
\diam(D)-R$,\ $\mathcal{H}^{m-1}(P_{\xi}\cap
D)\ge\omega_{m-1}R^{m-1}$. We conclude that
\begin{align*}
\vert D \vert &=\int_0^{\diam (D)} d\xi \,
\mathcal{H}^{m-1}(P_{\xi}\cap
D)\\
&\geq \frac{1}{2}\omega_{m}R^{m} + \int_R^{\diam (D)-R} d\xi \,
\mathcal{H}^{m-1}(P_{\xi}\cap
D)+\frac{1}{2}\omega_{m}R^{m}\\
&\geq \omega_{m}R^{m} + \int_R^{\diam (D)-R} d\xi \,\omega_{m-1}R^{m-1}\\
&=\omega_{m}R^{m} + \omega_{m-1}(\diam (D)-2R)R^{m-1}.
\end{align*}
This implies \eqref{e11}, and the assertion of equality for an
$R$-neighbourhood of a straight line segment.\\
\noindent (iii) We have by (i) and (ii) that $D$ is bounded. Since $D$ is $R$-smooth we infer by Lemma 2.2 in \cite{AKSZ} that $D$ is a $C^{1,1}$ domain.\\
\noindent (iv) This follows from the fact that $D$ is a bounded $C^{1,1}$ domain and by Remark (ii) on p.183 in \cite{EG92}.\\
\noindent (v) Inequality \eqref{e18} was proved in Lemma 5 of
\cite{mvdB87} for a bounded, open set in $\R^{m}$ with $R$-smooth
boundary. As a consequence of \eqref{e18}, we have that (see
(6.15) in \cite{vdBD89})
\begin{equation*}
\Hdim^{m-1}(\partial D^{i}) \leq \frac{m \vert D^{i} \vert}{R}.
\end{equation*}
Since the components of $D$ and their respective boundaries are disjoint, we have that
\begin{equation*}
\Hdim^{m-1}(\partial D)=\sum_i \Hdim^{m-1}(\partial
D^i)\le \sum_i\frac{m|D^i|}{R}=\frac{m|D|}{R}.
\end{equation*}
\end{proof}
\section{Proof of Theorem~\ref{T:main}}\label{sec3}
Firstly, we set up a convenient coordinate system for the
calculations which follow. Let $x \in D$ satisfy $\delta(x)<
\frac{R}{2}$, and $x_{0} \in \partial D$ be the point such that
$\delta(x) = \vert x - x_{0} \vert$. Choose coordinates $(\zeta,
\hat{\zeta})$ of $\mathbb{R}^{m}$ where $\zeta$ is the direction
of the outward pointing normal to $\partial D$ at $x_{0}$,
$\hat{\zeta}$ represents the $m-1$ orthogonal directions to
$\zeta$ and $x=(0,0)$. In these coordinates, the centre of $B_{1}$
is $(-(R-\delta(x)),0)$ and the centre of $B_{2}$ is
$(\delta(x)+R,0)$. We suppress the $x$-dependence of $B_1$ and
$B_2$ respectively throughout.
\\[5pt]
\indent In the following two lemmas we give
pointwise lower and upper bounds for $u_{D}(x;t)$ respectively
when $\delta(x)<\frac{R}{2}.$
\begin{lemma}\label{L:lowersol}
Let $D \subset \mathbb{R}^{m}$, $m \geq 2$, be an open set with
$R$-smooth boundary $\partial D$ and let $x \in D$ such that
$\delta(x)<\frac{R}{2}$. Then
\begin{align*}
\int_{D}dy\, p(x,y;t) &\geq 1 -(4\pi t)^{-1/2}
\int_{\delta(x)}^{\infty}d\zeta \, e^{-\zeta^{2}/(4t)}
- \frac{\sqrt{2}}{2}e^{-R^2/(8t)}\\
&\ \ \ - (4\pi t)^{-m/2}\int_{-R} ^{\delta(x)}d\zeta \,
e^{-\zeta^2/(4t)}\int_{\{\vert \hat{\zeta} \vert >
\left((\delta(x)- \zeta)\frac{R}{2}\right)^{1/2}\}}d\hat{\zeta} \,
e^{-\vert \hat{\zeta} \vert^ 2/(4t)}.
\end{align*}
\end{lemma}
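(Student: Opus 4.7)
The plan is to start from the identity $u_D(x;t) = 1 - \int_{\R^m\setminus D} p(x,y;t)\,dy$, obtained by integrating the heat kernel against $\mathds{1}$, and then to upper-bound the tail integral by exploiting the interior osculating ball. Since $B_1\subset D$, one has $\R^m\setminus D\subseteq \R^m\setminus B_1$, and in the chosen coordinates $B_1$ has centre $(-(R-\delta(x)),0)$, so that
\begin{equation*}
\R^m\setminus B_1=\{(\zeta,\hat\zeta):(\zeta+R-\delta(x))^2+|\hat\zeta|^2>R^2\}.
\end{equation*}

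I would split $\R^m\setminus B_1$ into three disjoint regions and bound the integral of $p(x,\cdot;t)$ over each. First, the half-space $\{\zeta\ge \delta(x)\}$ lies entirely outside $B_1$, and the integral of $p$ over it factorises as a one-dimensional Gaussian tail $(4\pi t)^{-1/2}\int_{\delta(x)}^{\infty}e^{-\zeta^2/(4t)}\,d\zeta$, producing the first correction term. Second, on the deep exterior $\{\zeta<-R\}$ I would drop the $\hat\zeta$-restriction and integrate $p$ over the entire half-space; writing $e^{-\zeta^2/(4t)}=e^{-\zeta^2/(8t)}\cdot e^{-\zeta^2/(8t)}$ and using the pointwise bound $e^{-\zeta^2/(8t)}\le e^{-R^2/(8t)}$ for $|\zeta|\ge R$, followed by completing the Gaussian integral, yields exactly $(\sqrt{2}/2)e^{-R^2/(8t)}$.

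Finally, for the slab $\{-R\le\zeta<\delta(x)\}\cap(\R^m\setminus B_1)$ I would substitute $u=\delta(x)-\zeta\in(0,R+\delta(x)]$, recasting the defining condition as $|\hat\zeta|^2>u(2R-u)$. The hypothesis $\delta(x)<R/2$ forces $u\le 3R/2$, which is precisely the range on which $u(2R-u)\ge uR/2$ holds, so this slab piece is contained in $\{-R\le\zeta<\delta(x),\ |\hat\zeta|^2>(\delta(x)-\zeta)R/2\}$. Integrating $p$ over this enlarged region reproduces the last term of the lemma, and summing the three contributions completes the bound. I expect the main obstacle to be identifying the simplified threshold $uR/2$: it must dominate $u(2R-u)$ uniformly on the whole slab, which is exactly where the standing assumption $\delta(x)<R/2$ is pinned down, while remaining simple enough that the resulting $\hat\zeta$-integral can still be handled cleanly in the subsequent heat-content estimates.
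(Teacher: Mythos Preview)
Your proposal is correct and is essentially the paper's proof viewed from the complementary side: the paper lower-bounds $\int_D p$ by $\int_{B_1} p$ and computes the latter directly by slicing $B_1$ in $\zeta$, whereas you write $\int_{B_1} p = 1 - \int_{\R^m\setminus B_1} p$ and split the complement into the same three regions that emerge in the paper's manipulation. The key geometric step---replacing the true cross-sectional radius $\eta^2=(\delta(x)-\zeta)(2R+\zeta-\delta(x))$ by the simpler threshold $(\delta(x)-\zeta)R/2$, valid exactly because $\delta(x)<R/2$ forces $u=\delta(x)-\zeta\le 3R/2$---is identical in both arguments, and your Gaussian tail bound for the region $\{\zeta<-R\}$ reproduces the paper's $\frac{\sqrt{2}}{2}e^{-R^2/(8t)}$ term.
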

\begin{proof}
First suppose that $D$ is connected. The set of points  $(\zeta,
\hat{\zeta})\in \R^m$ with fixed $\zeta \in [\delta(x)-2R,
\delta(x)]$ intersects the ball $B_{1}$ in an $(m-1)$-dimensional
ball of radius $\eta$, where $\eta =
(\delta(x)-\zeta)^{1/2}(2R+\zeta-\delta(x))^{1/2}$. Below we use
the fact that for $\zeta > -R$ and $\delta (x) <\frac{R}{2}$,
$\eta \geq \left((\delta(x)-\zeta)\frac{R}{2}\right)^ {1/2}$. So
\begin{align*}
&\int_{D} dy \, p(x,y;t) \geq \int_{B_{1}} dy \, p(x,y;t)\nonumber
\\ &=(4\pi t)^{-m/2}\int_{\delta(x)-2R}^{\delta(x)} d\zeta \,
e^{-\zeta^{2}/(4t)}
\int_{\{\vert \hat{\zeta}\vert < \eta\}} d\hat{\zeta}e^{-\vert \hat{\zeta}\vert^{2}/(4t)}\\
&\geq (4\pi t)^{-m/2}\int_{-R}^{\delta(x)} d\zeta \,
e^{-\zeta^{2}/(4t)} \int_{\{\vert \hat{\zeta}\vert <
\left((\delta(x)-\zeta)\frac{R}{2}\right)^{1/2}\}}
d\hat{\zeta}e^{-\vert \hat{\zeta}\vert^{2}/(4t)}\\
&=(4\pi t)^{-1/2}\int_{-R}^{\delta(x)} d\zeta \,
e^{-\zeta^{2}/(4t)}\nonumber \\ &\hspace{15mm} - (4\pi
t)^{-m/2}\int_{-R}^{\delta(x)} d\zeta \, e^{-\zeta^{2}/(4t)}
\int_{\{\vert \hat{\zeta}\vert
> \left((\delta(x)-\zeta)\frac{R}{2}\right)^{1/2}\}}
d\hat{\zeta}e^{-\vert \hat{\zeta}\vert^{2}/(4t)}\\
&= 1 -(4\pi t)^{-1/2}\int_{\delta(x)}^{\infty} d\zeta \,
e^{-\zeta^{2}/(4t)}
-(4\pi t)^{-1/2}\int_{-\infty}^{-R} d\zeta \, e^{-\zeta^{2}/(4t)}\\
&\hspace{15mm}-(4\pi t)^{-m/2}\int_{-R}^{\delta(x)} d\zeta \,
e^{-\zeta^{2}/(4t)} \int_{\{\vert \hat{\zeta}\vert >
\left((\delta(x)-\zeta)\frac{R}{2}\right)^{1/2}\}}
d\hat{\zeta}e^{-\vert \hat{\zeta}\vert^{2}/(4t)}\\
&\geq 1 -(4\pi t)^{-1/2}\int_{\delta(x)}^{\infty}d\zeta \,
e^{-\zeta^{2}/(4t)}
- \frac{\sqrt{2}}{2}e^{-R^{2}/(8t)}\\
&\hspace{15mm} - (4\pi t)^{-m/2}\int_{-R} ^{\delta(x)}d\zeta \,
e^{-\zeta^{2}/(4t)}\int_{\{\vert \hat{\zeta} \vert >
\left((\delta(x)-
\zeta)\frac{R}{2}\right)^{1/2}\}}d\hat{\zeta} \, e^{-\vert \hat{\zeta} \vert ^{2}/(4t)}.\\
\end{align*}
If $D$ is not connected, then $D = \cup_{i=1}^{N} D^{i}$ for some
$N \in \mathbb{N}$. Since $x \in D$, $x \in D^{i}$ for some $i \in
\{1, \dots, N\}$ with $\delta(x)<\frac{R}{2}$. By the
$R$-smoothness of the boundary, there is a ball $B_{1}^{i}$ of
radius $R$ such that $B_{1}^{i} \subset D^{i}$. So we recover
Lemma \ref{L:lowersol} in that case by the computation above.
\end{proof}
\begin{lemma}\label{L:uppersol}
Let $D \subset \mathbb{R}^{m}$, $m \geq 2$, be an open set with
$R$-smooth boundary $\partial D$, and let $x \in
D$ such that $\delta(x)<\frac{R}{2}$. Then
\begin{align*}
\int_{D}dy\, p(x,y;t) &\leq 1 -(4\pi
t)^{-1/2}\int_{\delta(x)}^{\infty} d\zeta \,
e^{-\zeta^{2}/(4t)} + \frac{\sqrt{2}}{2}e^{-R^{2}/(8t)}\\
&\ \ \ + (4\pi t)^{-m/2}\int_{\delta(x)}^{\delta(x)+R} d\zeta \,
e^{-\zeta^{2}/(4t)} \int_{\{\vert \hat{\zeta} \vert >
((\zeta-\delta(x))R)^{1/2}\}} d\hat{\zeta} \, e^{-\vert
\hat{\zeta} \vert^{2}/(4t)}.
\end{align*}
\end{lemma}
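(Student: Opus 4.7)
The strategy is exactly dual to that of Lemma \ref{L:lowersol}: by $R$-smoothness at the point $x_0\in\partial D$ nearest to $x$, there is a ball $B_2\subset \R^m-\bar D$ of radius $R$ whose centre is $(\delta(x)+R,0)$ in the chosen coordinates. Since $D\subset \R^m-B_2$ one has
\[
\int_D dy\, p(x,y;t)\le 1-\int_{B_2}dy\, p(x,y;t),
\]
so the task reduces to producing a lower bound for $\int_{B_2}p(x,y;t)\,dy$ that, after subtraction from $1$, yields the stated inequality. Note that no case distinction on the components of $D$ is needed here: $B_2$ lies in the complement of $\bar D$ as a whole.

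For $\zeta\in[\delta(x),\delta(x)+2R]$ the slice of $B_2$ at height $\zeta$ is the $(m-1)$-dimensional ball of radius $\bigl((\zeta-\delta(x))(2R-\zeta+\delta(x))\bigr)^{1/2}$. Restricting the $\zeta$-integration to $[\delta(x),\delta(x)+R]$ gives $2R-\zeta+\delta(x)\ge R$, and consequently the slice radius is at least $((\zeta-\delta(x))R)^{1/2}$. Hence
\[
\int_{B_2}dy\, p(x,y;t)\ge (4\pi t)^{-m/2}\int_{\delta(x)}^{\delta(x)+R} d\zeta\, e^{-\zeta^2/(4t)}\int_{\{|\hat\zeta|<((\zeta-\delta(x))R)^{1/2}\}} d\hat\zeta\, e^{-|\hat\zeta|^2/(4t)}.
\]
Writing each inner Gaussian integral as the full integral over $\R^{m-1}$ (which contributes a factor $(4\pi t)^{(m-1)/2}$) minus its integral over the complement of the ball, the bound splits into a one-dimensional main term $(4\pi t)^{-1/2}\int_{\delta(x)}^{\delta(x)+R}e^{-\zeta^2/(4t)}d\zeta$ minus an error term that will appear with the opposite sign in the upper bound for $u_D(x;t)$.

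It remains to extend the $\zeta$-integration from $[\delta(x),\delta(x)+R]$ to $[\delta(x),\infty)$, at the cost of the tail $(4\pi t)^{-1/2}\int_{\delta(x)+R}^{\infty}e^{-\zeta^2/(4t)}d\zeta$. Since $\delta(x)+R\ge R$, this tail is dominated by $(4\pi t)^{-1/2}\int_R^{\infty}e^{-\zeta^2/(4t)}d\zeta$; together with the elementary inequality $\zeta^2/(4t)\ge R^2/(8t)+\zeta^2/(8t)$ valid for $\zeta\ge R$ and the standard Gaussian integral $\int_0^\infty e^{-\zeta^2/(8t)}d\zeta=\tfrac{1}{2}\sqrt{8\pi t}$, this tail is at most $\tfrac{\sqrt 2}{2}e^{-R^2/(8t)}$. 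Reassembling the pieces and subtracting from $1$ yields the stated upper bound. The main subtlety is this final tail estimate: one must choose the integration range $[\delta(x),\delta(x)+R]$ precisely so that the clean slice lower bound $((\zeta-\delta(x))R)^{1/2}$ holds, and then absorb the truncation error via the $R^2/(8t)$ Gaussian bound; the rest is a direct mirror of Lemma \ref{L:lowersol}.
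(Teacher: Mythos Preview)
Your argument is correct and is essentially the same computation as the paper's, just organised as ``$1-\int_{B_2}p$ with a lower bound on $\int_{B_2}p$'' rather than the paper's decomposition $\R^m-B_2\subset H_x\cup\tilde H_x\cup S_x$ into two half-spaces and a slab; the two are algebraically identical once one restricts to the half of $B_2$ with $\zeta\in[\delta(x),\delta(x)+R]$ and uses the same slice-radius bound $\eta\ge((\zeta-\delta(x))R)^{1/2}$. Your remark that no connectedness case split is needed here is also correct (and slightly cleaner than the paper): $B_2\subset\R^m-\bar D$ holds for the full set $D$, not merely for the component containing $x$.
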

\begin{proof} First suppose that $D$ is connected. Let $H_{x} = \{(\zeta,\hat{\zeta}) \, \vert \, -\infty < \zeta < \delta(x)\}$,
$\tilde{H}_{x} = \{(\zeta,\hat{\zeta}) \, \vert \, \delta(x) +R <
\zeta < \infty\}$ and $S_{x}$ be the slice of width $R$ with
$\partial S_{x}$ parallel to $\partial H_{x}$ excluding the ball
$B_{2}$. The set of points  $(\zeta, \hat{\zeta})\in \R^m$ with
fixed $\zeta \in [\delta(x), \delta(x)+R]$ intersects the ball
$B_{2}$ in an $(m-1)$-dimensional ball of radius $\eta$, where
$\eta=(\zeta-\delta(x))^{1/2}(2R+\delta(x)-\zeta)^{1/2}$. Below we
use the fact that for $\zeta < \delta(x) +R$, $\eta \geq
((\zeta-\delta(x))R)^{1/2}$. So
\begin{align*}
\int_{D}dy \, p(x,y;t)
&\leq \int_{\mathbb{R}^{m}-B_{2}} dy \, p(x,y;t)\\
&\leq \int_{H_{x}} dy \, p(x,y;t) + \int_{\tilde{H}_{x}} dy \, p(x,y;t) + \int_{S_{x}} dy \, p(x,y;t)\\
&=(4\pi t)^{-1/2}\int_{-\infty}^{\delta(x)} d\zeta \,
e^{-\zeta^{2}/(4t)}
+(4\pi t)^{-1/2}\int_{\delta(x)+R}^{\infty} d\zeta \, e^{-\zeta^{2}/(4t)}\\
&\ \ \ +(4\pi t)^{-m/2}\int_{\delta(x)}^{\delta(x)+R} d\zeta \,
e^{-\zeta^{2}/(4t)}
\int_{\{\vert \hat{\zeta} \vert > \eta\}} d\hat{\zeta} \, e^{-\vert \hat{\zeta} \vert^{2}/(4t)}
\end{align*}
\begin{align*}
&\leq 1 - (4\pi t)^{-1/2}\int_{\delta(x)}^{\infty} d\zeta \,
e^{-\zeta^{2}/(4t)}
+\frac{\sqrt{2}}{2}e^{-R^{2}/(8t)}\\
& \ \ \ + (4\pi t)^{-m/2}\int_{\delta(x)}^{\delta(x)+R} d\zeta \,
e^{-\zeta^{2}/(4t)} \int_{\{\vert \hat{\zeta} \vert >
((\zeta-\delta(x))R)^{1/2}\}} d\hat{\zeta} \,
e^{-\vert \hat{\zeta} \vert^{2}/(4t)}.\\
\end{align*}
If $D$ is not connected, then $D=\cup_{i=1}^{N} D^{i}$ for some $N
\in \mathbb{N}$. Since $x \in D$, $x \in D^{i}$ for some $i \in
\{1, \dots, N\}$ such that $\delta(x)<\frac{R}{2}$. By the
$R$-smoothness of the boundary, there is a ball $B_{2}^{i}$ of
radius $R$ such that $B_{2}^{i} \subset \mathbb{R}^{m} -
\bar{D^{i}}$ and $D \subset \mathbb{R}^{m} - B_{2}^{i}$. So we
recover Lemma \ref{L:uppersol} in that case by the computation
above.
\end{proof}
In the following two lemmas, we give lower and upper bounds for
$H_{D}(t)$ which imply Theorem~\ref{T:main}. The $R$-smoothness of
the boundary $\partial D$ ensures that the components $D^{i}$ of
$D$ are sufficiently far apart so that the heat flow from one
component has a negligible effect on the heat flow of another
component. We also use \eqref{e18}, \eqref{e19b} and the additivity
properties of the Hausdorff and Lebesgue measures.
\begin{lemma}\label{L:lowerhc}
Let $D \subset \mathbb{R}^{m}$, $m \geq 2$, be an open set with $R$-smooth boundary $\partial D$ and finite Lebesgue
measure $\vert D \vert$. Then
\begin{align*}
H_{D}(t) &\geq \vert D \vert - \pi^{-1/2}\Hdim^{m-1}(\partial
D)t^{1/2} - 2^{m/2} \big\vert D_{R/2} \big\vert
e^{-R^{2}/(32t)}\nonumber \\ &\ \ \ -
(m-1)2^{m-2}\Hdim^{m-1}(\partial D)R^{-1} \, t -
\frac{\sqrt{2}}{2}\big\vert D - D_{R/2} \big\vert
e^{-R^{2}/(8t)}\nonumber \\ &\ \ \  -
4(m-1)(1+(m-1)2^{m-2})\Hdim^{m-1}(\partial D)R^{-1} \, t.
\end{align*}
\end{lemma}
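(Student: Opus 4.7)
My plan is to split $H_D(t) = \int_{D_{R/2}} u_D(x;t)\,dx + \int_{D-D_{R/2}} u_D(x;t)\,dx$ and bound each piece. For $x\in D_{R/2}$, the ball $B(x;R/2)$ lies in $D$, so the standard Gaussian tail estimate $\int_{|z|>R/2}(4\pi t)^{-m/2}e^{-|z|^2/(4t)}\,dz\le 2^{m/2}e^{-R^2/(32t)}$ yields $u_D(x;t)\ge 1 - 2^{m/2}e^{-R^2/(32t)}$; integrating gives $|D_{R/2}| - 2^{m/2}|D_{R/2}|e^{-R^2/(32t)}$. For $x\in D-D_{R/2}$ I insert Lemma~\ref{L:lowersol}: the constant $1$ contributes $|D-D_{R/2}|$, which combines with the bulk to recover $|D|$, and the constant $\tfrac{\sqrt{2}}{2}e^{-R^2/(8t)}$ integrates directly to $\tfrac{\sqrt{2}}{2}|D-D_{R/2}|e^{-R^2/(8t)}$.

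For the two remaining non-constant terms of Lemma~\ref{L:lowersol}, the key tool is the coarea identity
$$\int_{D-D_{R/2}}f(\delta(x))\,dx = \int_0^{R/2}f(r)\,\Hdim^{m-1}(\partial D_r)\,dr,$$
combined with $\Hdim^{m-1}(\partial D_r)\le (R/(R-r))^{m-1}\Hdim^{m-1}(\partial D)$ from Proposition~\ref{L:prop1}(v) and the elementary bound $(R/(R-r))^{m-1} - 1 \le (m-1)2^{m-1}r/R$ valid on $[0,R/2]$ (obtainable from $(1+v)^{m-1} - 1 \le (m-1)2^{m-2}v$ for $v\le 1$). Writing $(R/(R-r))^{m-1} = 1 + [(R/(R-r))^{m-1}-1]$ cleanly separates each contribution into a leading piece and a Jacobian correction. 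For the half-space tail $(4\pi t)^{-1/2}\int_{\delta(x)}^\infty e^{-\zeta^2/(4t)}\,d\zeta$, the leading piece, after swapping order and extending $r$ to $\infty$, reduces to $\pi^{-1/2}\Hdim^{m-1}(\partial D)t^{1/2}$, while the Jacobian correction (using the identity $(4\pi t)^{-1/2}\int_0^\infty r\int_r^\infty e^{-\zeta^2/(4t)}\,d\zeta\,dr = t/2$) yields $(m-1)2^{m-2}\Hdim^{m-1}(\partial D)R^{-1}t$.

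The main obstacle is the curvature correction
$$C(x;t) = (4\pi t)^{-m/2}\int_{-R}^{\delta(x)}d\zeta\,e^{-\zeta^2/(4t)}\int_{|\hat\zeta|>((\delta(x)-\zeta)R/2)^{1/2}}d\hat\zeta\,e^{-|\hat\zeta|^2/(4t)},$$
because a direct Gaussian tail bound on the inner $\hat\zeta$-integral yields only an $O(\Hdim^{m-1}(\partial D)t^{1/2})$ bound, weaker than the required $O(\Hdim^{m-1}(\partial D)R^{-1}t)$. The trick is, after applying coarea, to reduce the problem to bounding $\int_0^{R/2}C_r(t)\,dr$ and $\int_0^{R/2}r\,C_r(t)\,dr$, and then to switch the order of integration so that $r$ becomes innermost. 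For fixed $(\zeta,\hat\zeta)\in\R\times\R^{m-1}$, the joint constraints $r\in[0,R/2]$, $r>\zeta$ and $|\hat\zeta|^2>(r-\zeta)R/2$ restrict $r$ to the interval $(\max(\zeta,0),\min(R/2,\zeta+2|\hat\zeta|^2/R))$, whose length is at most $2|\hat\zeta|^2/R$; the radial constraint on $\hat\zeta$ is thereby absorbed into the $r$-length. Evaluating the remaining Gaussian integrals via the second moment $\int_{\R^{m-1}}|\hat\zeta|^2 e^{-|\hat\zeta|^2/(4t)}\,d\hat\zeta = 2(m-1)t(4\pi t)^{(m-1)/2}$ yields $\int_0^{R/2}C_r(t)\,dr\le 4(m-1)t/R$; the analogous computation (noting $\int_{\text{range}}r\,dr\le|\hat\zeta|^2$) gives $\int_0^{R/2}rC_r(t)\,dr\le 2(m-1)t$. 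Combining these via the Jacobian split produces the error $4(m-1)(1+(m-1)2^{m-2})\Hdim^{m-1}(\partial D)R^{-1}t$, and summing all contributions yields the claimed inequality.
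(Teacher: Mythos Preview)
Your proof is correct and follows essentially the same approach as the paper's: the same split into $D_{R/2}$ and $D-D_{R/2}$, the same pointwise input (Lemma~\ref{L:lowersol}), the same coarea reduction with the Jacobian bound from Proposition~\ref{L:prop1}(v), and the same Gaussian second-moment computation for the curvature correction. The only cosmetic difference is that the paper bounds the weight $(R/(R-r))^{m-1}$ by the constant $1+(m-1)2^{m-2}$ \emph{before} reducing the curvature term (via the substitution $\theta=r-\zeta$ and extending $r$ to all of $\R$), whereas you keep the leading/correction split and integrate $r$ innermost directly; both routes land on the identical final constant.
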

\begin{proof}
First suppose that $D$ is connected. If $\delta(x) > \frac{R}{2}$
then by Proposition 9(i) in \cite{mvdB13}, we have that
\[\int_{D} dy \, p(x,y;t) \geq 1 - 2^{m/2}e^{-R^{2}/(32t)},\]
and so
\[\int_{D_{R/2}}dx \int_{D} dy \, p(x,y;t) \geq \big\vert D_{R/2} \big\vert
-2^{m/2}\big\vert D_{R/2} \big\vert e^{-R^{2}/(32t)}.\]
If $\delta(x)<\frac{R}{2}$, integrating the result of
Lemma~\ref{L:lowersol} with respect to $x$ over $D - D_{R/2}$ and
using Lemma 6.7 in \cite{vdBD89}, we obtain that
\begin{align*}
&\int_{D - D_{R/2}} dx \int_{D} dy \, p(x,y;t)\nonumber \\ & \geq
\big\vert D - D_{R/2} \big \vert - (4\pi t)^{-1/2}\int_{D -
D_{R/2}} dx \int_{\delta(x)}^{\infty} d\zeta \,
e^{-\zeta^{2}/(4t)} -\frac{\sqrt{2}}{2} \big\vert D - D_{R/2}
\big\vert e^{-R^{2}/(8t)}\nonumber \\ &\ \ \  - (4\pi
t)^{-m/2}\int_{D - D_{R/2}} dx \int_{-R} ^{\delta(x)}d\zeta \,
e^{-\zeta^{2}/(4t)}\int_{\{\vert \hat{\zeta} \vert >
\left((\delta(x)-
\zeta)\frac{R}{2}\right)^{1/2}\}}d\hat{\zeta} \, e^{-\vert \hat{\zeta} \vert ^{2}/(4t)}\\
& = \big\vert D - D_{R/2} \big \vert - (4\pi
t)^{-1/2}\int_{0}^{R/2}dr \, \Hdim^{m-1}(\partial D_{r})
\int_{r}^{\infty}d\zeta \, e^{-\zeta^{2}/(4t)}\nonumber
\end{align*}
\begin{align*}
&\ \ \ -\frac{\sqrt{2}}{2} \big\vert D - D_{R/2} \big\vert e^{-R^{2}/(8t)}\\
&\ \ \  - (4\pi t)^{-m/2}\int_{0}^{R/2} dr \, \Hdim^{m-1}(\partial
D_{r}) \int_{-R} ^{r}d\zeta \, e^{-\zeta^{2}/(4t)}\int_{\{\vert
\hat{\zeta} \vert > \left((r - \zeta)\frac{R}{2}\right)^{1/2}\}}
d\hat{\zeta} \, e^{-\vert \hat{\zeta} \vert ^{2}/(4t)}\\
&\geq \big\vert D - D_{R/2} \big \vert - (4\pi t)^{-1/2}
\Hdim^{m-1}(\partial D)\int_{0}^{R/2}dr \,
\left(\frac{R}{R-r}\right)^{m-1}\int_{r}^{\infty}d\zeta \,
e^{-\zeta^{2}/(4t)}\nonumber \\ &\ \ \ -\frac{\sqrt{2}}{2}
\big\vert D-D_{R/2} \big\vert e^{-R^{2}/(8t)} - (4\pi
t)^{-m/2}\Hdim^{m-1}(\partial D)\nonumber \\ &\ \ \
\times\int_{0}^{R/2} dr \, \left(\frac{R}{R-r}\right)^{m-1}
\int_{-R} ^{r}d\zeta \, e^{-\zeta^{2}/(4t)}\int_{\{\vert
\hat{\zeta} \vert > \left((r - \zeta)\frac{R}{2}\right)^{1/2}\}}
d\hat{\zeta} \, e^{-\vert \hat{\zeta} \vert ^{2}/(4t)}\\
& \geq \big\vert D - D_{R/2} \big \vert - (4\pi
t)^{-1/2}\Hdim^{m-1}(\partial D)\int_{0}^{R/2}dr \,
\left(1+(m-1)2^{m-1}\frac{r}{R}\right)\int_{r}^{\infty}d\zeta \,
e^{-\zeta^{2}/(4t)}\nonumber \\ &\ \ \ -\frac{\sqrt{2}}{2}
\big\vert D -  D_{R/2} \big\vert e^{-R^{2}/(8t)}-
(4\pi t)^{-m/2}\Hdim^{m-1}(\partial D)\nonumber \\
&\ \ \  \times\int_{0}^{R/2} dr \,
\left(1+(m-1)2^{m-1}\frac{r}{R}\right) \int_{-R} ^{r}d\zeta \,
e^{-\zeta^{2}/(4t)}\int_{\{\vert \hat{\zeta} \vert > \left((r -
\zeta)\frac{R}{2}\right)^{1/2}\}}
d\hat{\zeta} \, e^{-\vert \hat{\zeta} \vert ^{2}/(4t)}\\
&\geq \big\vert D-D_{R/2} \big \vert - (4\pi
t)^{-1/2}\Hdim^{m-1}(\partial D)\int_{0}^{\infty}dr
\int_{r}^{\infty}d\zeta \, e^{-\zeta^{2}/(4t)} \\
&\ \ \  - (4\pi t)^{-1/2}(m-1)2^{m-1}\Hdim^{m-1}(\partial
D)R^{-1}\int_{0}^{\infty}dr \, r \int_{r}^{\infty}d\zeta \,
e^{-\zeta^{2}/(4t)}\nonumber \\ & \ \ \  -\frac{\sqrt{2}}{2}
\big\vert D-D_{R/2} \big\vert e^{-R^{2}/(8t)} - (4\pi
t)^{-m/2}(1+(m-1)2^{m-2})\Hdim^{m-1}(\partial D)\nonumber
\\ & \ \ \  \times \int_{-\infty}^{\infty} dr \int_{0} ^{\infty}d\theta \,
e^{-(r-\theta)^{2}/(4t)}\int_{\{\vert \hat{\zeta} \vert >
\left(R\theta/2\right)^{1/2}\}}
d\hat{\zeta} \, e^{-\vert \hat{\zeta} \vert ^{2}/(4t)}\\
&= \big\vert D-D_{R/2} \big \vert -\pi^{-1/2} \Hdim^{m-1}(\partial
D)t^{1/2} -(m-1)2^{m-2} \Hdim^{m-1}(\partial D)R^{-1}t\nonumber \\
&\ \ \
-\frac{\sqrt{2}}{2} \big\vert D-D_{R/2} \big\vert e^{-R^{2}/(8t)}\\
&\ \ \  - (4\pi t)^{-(m-1)/2}(1+(m-1)2^{m-2})\Hdim^{m-1}(\partial
D) \int_{0}^{\infty}d\theta \int_{\{\vert \hat{\zeta} \vert >
\left(R\theta/2\right)^{1/2}\}}d\hat{\zeta} \, e^{-\vert
\hat{\zeta} \vert ^{2}/(4t)}\\ &= \big\vert D-D_{R/2} \big \vert -
\pi^{-1/2}\Hdim^{m-1}(\partial D)t^{1/2} -(m-1)2^{m-2}
\Hdim^{m-1}(\partial D)R^{-1} t\nonumber
\\ &\ \ \  -\frac{\sqrt{2}}{2} \big\vert D-D_{R/2} \big\vert
e^{-R^{2}/(8t)}
 - 4(m-1)(1+(m-1)2^{m-2})\Hdim^{m-1}(\partial D)R^{-1} \, t.\\
\end{align*}
If $D$ is not connected, then $D=\cup_{i=1}^{N} D^{i}$ for some $N
\in \mathbb{N}$ and the above result holds for each component
$D^{i}$ of $D$. By taking the sum of the above result for each
component $D^{i}$ over $i=1,\dots , N$, we obtain
\begin{align*}
&\int_{D} dx \int_{D} dy \, p(x,y;t) \nonumber \\ &\ge
\sum_{i=1}^{N} \left(\big\vert D^{i}_{R/2}\big\vert
-2^{m/2}\big\vert D^{i}_{R/2} \big\vert
e^{-R^{2}/(32t)} + \int_{D^{i}-D^{i}_{R/2}}dx \int_{B_{1}^{i}} dy \, p(x,y;t)\right)\\
&= \big\vert D_{R/2}\big\vert -2^{m/2}\big\vert D_{R/2} \big\vert
e^{-R^{2}/(32t)} +
\sum_{i=1}^{N} \int_{D^{i}-D^{i}_{R/2}} dx \, \int_{B_{1}^{i}} dy \, p(x,y;t)\\
&\geq \vert D \vert - \pi^{-1/2}\Hdim^{m-1}(\partial D)t^{1/2} -
2^{m/2} \big\vert D_{R/2} \big\vert e^{-R^{2}/(32t)}\nonumber \\
&\ \ \ - (m-1)2^{m-2}\Hdim^{m-1}(\partial D)R^{-1}t -
\frac{\sqrt{2}}{2}\big\vert D - D_{R/2} \big\vert
e^{-R^{2}/(8t)}\nonumber \\ &\ \ \ -
4(m-1)(1+(m-1)2^{m-2})\Hdim^{m-1}(\partial D)R^{-1}t.
\end{align*}
\end{proof}
\begin{lemma}\label{L:upperhc}
Let $D \subset \mathbb{R}^{m}$, $m \geq 2$, be an open set with $R$-smooth boundary $\partial D$ and finite Lebesgue
measure $\vert D \vert$. Then
\begin{align*}
H_{D}(t) &  \leq \vert D \vert - \pi^{-1/2}\Hdim^{m-1}(\partial
D)t^{1/2} + 2\pi^{-1/2}\Hdim^{m-1}(\partial D)t^{1/2}
e^{-R^{2}/(32t)}\nonumber \\ &\ \ \  +
2^{-1}(m-1)\Hdim^{m-1}(\partial D)R^{-1}t  +
\frac{\sqrt{2}}{2}\big\vert D-D_{R/2}\big\vert
e^{-R^{2}/(8t)}\nonumber \\ &\ \ \  +
(m-1)(1+(m-1)2^{m-2})\Hdim^{m-1}(\partial D)R^{-1}t.
\end{align*}
\end{lemma}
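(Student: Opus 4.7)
The plan is to mirror the proof of Lemma~\ref{L:lowerhc}, reversing the direction of each inequality. Split
\[H_D(t)=\int_D dx\int_D dy\, p(x,y;t)\]
into the contributions from $x\in D_{R/2}$ and from $x\in D\setminus D_{R/2}$. On $D_{R/2}$ the trivial bound $\int_D dy\, p(x,y;t)\le 1$ contributes $|D_{R/2}|$. On $D\setminus D_{R/2}$, where $\delta(x)<R/2$, I apply Lemma~\ref{L:uppersol} and then convert the $x$-integral into an $(r,\hat\zeta)$-integral by the coarea identity $\int_{D\setminus D_{R/2}} dx\, f(\delta(x))=\int_0^{R/2} dr\, \mathcal{H}^{m-1}(\partial D_r)\, f(r)$. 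Because Lemma~\ref{L:uppersol} bounds $\int_D dy\, p(x,y;t)$ even when $D$ is disconnected (the outer ball $B_2$ lies in $\R^m-\bar D$, hence $D\subset\R^m-B_2$), no separate component decomposition is required; the additivity of $\mathcal{H}^{m-1}(\partial D_r)=\sum_i\mathcal{H}^{m-1}(\partial D^i_r)$ ensures the global bounds of Proposition~\ref{L:prop1}(v) apply, and $|D_{R/2}|+|D\setminus D_{R/2}|=|D|$ produces the leading term.

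The crucial step is the negative ``surface'' contribution
\[-(4\pi t)^{-1/2}\int_0^{R/2}\! dr\,\mathcal{H}^{m-1}(\partial D_r)\int_r^\infty e^{-\zeta^2/(4t)}d\zeta,\]
which must be bounded from above; equivalently, I lower-bound the double integral. For this I use the \emph{lower} half of \eqref{e18}, $\mathcal{H}^{m-1}(\partial D_r)\ge \mathcal{H}^{m-1}(\partial D)(1-r/R)^{m-1}$, refined by Bernoulli's inequality $(1-r/R)^{m-1}\ge 1-(m-1)r/R$. The identity $\int_0^{R/2} dr\int_r^\infty e^{-\zeta^2/(4t)}d\zeta=2t-\int_{R/2}^\infty dr\int_r^\infty e^{-\zeta^2/(4t)}d\zeta$, combined with a tail estimate on the second piece (using $\int_r^\infty e^{-\zeta^2/(4t)}d\zeta\le (2t/r)e^{-r^2/(4t)}$ for $r\ge R/2$), produces the main term $-\pi^{-1/2}\mathcal{H}^{m-1}(\partial D)t^{1/2}$, the exponential correction $2\pi^{-1/2}\mathcal{H}^{m-1}(\partial D)t^{1/2}e^{-R^2/(32t)}$, and the linear correction $2^{-1}(m-1)\mathcal{H}^{m-1}(\partial D)R^{-1}t$ arising from the Bernoulli remainder integrated against $r\int_r^\infty e^{-\zeta^2/(4t)}d\zeta$.

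For the two positive tail terms of Lemma~\ref{L:uppersol} I use the \emph{upper} half of \eqref{e18} together with the estimate $(R/(R-r))^{m-1}\le 1+(m-1)2^{m-2}$ valid on $[0,R/2]$. The $\tfrac{\sqrt 2}{2}e^{-R^2/(8t)}$ term integrates directly to $\tfrac{\sqrt 2}{2}|D-D_{R/2}|e^{-R^2/(8t)}$. For the slice-minus-exterior-ball tail I substitute $\theta=\zeta-\delta(x)$, and use the sharper estimate $\int_r^{r+R/2}\! e^{-u^2/(4t)}du\le \tfrac{1}{2}(4\pi t)^{1/2}$ in place of extending the $r$-integral to all of $\R$ (as is done in Lemma~\ref{L:lowerhc}). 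Evaluation of
\[\int_0^\infty d\theta\int_{\{|\hat\zeta|>(\theta R)^{1/2}\}} e^{-|\hat\zeta|^2/(4t)}d\hat\zeta=R^{-1}\int_{\R^{m-1}}|\hat\zeta|^2 e^{-|\hat\zeta|^2/(4t)}d\hat\zeta=2(m-1)t(4\pi t)^{(m-1)/2}R^{-1}\]
then yields the final term $(m-1)(1+(m-1)2^{m-2})\mathcal{H}^{m-1}(\partial D)R^{-1}t$.

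The main obstacle is bookkeeping the directions of inequality: each invocation of \eqref{e18} must select the correct half (lower for the negative surface term, upper for the positive tails) according to the sign of the enclosing term from Lemma~\ref{L:uppersol}, and a careless extension of the $r$-integral to $\R$ when bounding the slice contribution costs a factor of $2$ and spoils the stated coefficient $(m-1)(1+(m-1)2^{m-2})$.
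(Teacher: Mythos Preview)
Your proposal is correct and follows the paper's proof essentially step for step: the same split $D_{R/2}\cup(D\setminus D_{R/2})$, the same use of Lemma~\ref{L:uppersol} together with the coarea formula, the lower half of \eqref{e18} with Bernoulli on the negative surface term, the upper half of \eqref{e18} (bounded by the constant $1+(m-1)2^{m-2}$ on $[0,R/2]$) on the positive slice term, and the same evaluation of the $\hat\zeta$-integral. Your observation that the component decomposition is unnecessary is valid and slightly cleaner than the paper, which treats connected $D$ first and then sums. Two small points: the $\zeta$-range in Lemma~\ref{L:uppersol} is $[\delta(x),\delta(x)+R]$, not $[\delta(x),\delta(x)+R/2]$, so after the substitution $u=r+\theta$ the $r$-integral is $\int_\theta^{\theta+R/2}e^{-u^2/(4t)}\,du\le\tfrac12(4\pi t)^{1/2}$, exactly as the paper obtains by extending $r$ to $[0,\infty)$; and your tail bound $\int_r^\infty e^{-\zeta^2/(4t)}\,d\zeta\le(2t/r)e^{-r^2/(4t)}$, when integrated over $r\ge R/2$, yields a term of order $t^{3/2}R^{-2}e^{-R^2/(16t)}$ rather than the stated $2\pi^{-1/2}t^{1/2}e^{-R^2/(32t)}$ --- to recover the exact exponent the paper instead writes $e^{-\zeta^2/(4t)}=e^{-\zeta^2/(8t)}e^{-\zeta^2/(8t)}\le e^{-R^2/(32t)}e^{-\zeta^2/(8t)}$ for $\zeta\ge R/2$ and then integrates $(\zeta-R/2)e^{-\zeta^2/(8t)}$ over $[R/2,\infty)$.
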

\begin{proof} First suppose that $D$ is connected. If $\delta(x) > \frac{R}{2}$ then
\[\int_{D_{R/2}} dx \int_{D} dy \, p(x,y;t) \leq \int_{D_{R/2}} dx \int_{\mathbb{R}^{m}} dy \, p(x,y;t)
= \big\vert D_{R/2} \big\vert.\] If $\delta(x)<\frac{R}{2}$,
integrating the result of Lemma~\ref{L:uppersol} with respect to
$x$ over $D-D_{R/2}$ and using Lemma 6.7, in \cite{vdBD89} we
obtain that
\begin{align*}
&\int_{D-D_{R/2}} dx \int_{D} dy \, p(x,y;t)\\
&\leq \big\vert D-D_{R/2}\big\vert - (4\pi
t)^{-1/2}\int_{D-D_{R/2}} dx \int_{\delta(x)}^{\infty} d\zeta
\,e^{-\zeta^{2}/(4t)} + \frac{\sqrt{2}}{2}\big\vert
D-D_{R/2}\big\vert
e^{-R^{2}/(8t)}\\
&\ \ \  + (4\pi t)^{-m/2}\int_{D-D_{R/2}} dx
\int_{\delta(x)}^{\delta(x)+R} d\zeta \, e^{-\zeta^{2}/(4t)}
\int_{\{\vert \hat{\zeta} \vert > ((\zeta-\delta(x))R)^{1/2}\}} d\hat{\zeta} \, e^{-\vert \hat{\zeta} \vert^{2}/(4t)}\\
&= \big\vert D-D_{R/2}\big\vert - (4\pi t)^{-1/2}\int_{0}^{R/2} dr
\, \Hdim^{m-1}(\partial D_{r}) \int_{r}^{\infty} d\zeta
\,e^{-\zeta^{2}/(4t)}\nonumber \\ &\ \ \  +
\frac{\sqrt{2}}{2}\big\vert D-D_{R/2}\big\vert
e^{-R^{2}/(8t)}\\
&\ \ \  + (4\pi t)^{-m/2}\int_{0}^{R/2} dr \, \Hdim^{m-1}(\partial
D_{r}) \int_{r}^{r+R} d\zeta \, e^{-\zeta^{2}/(4t)} \int_{\{\vert
\hat{\zeta} \vert > ((\zeta-r)R)^{1/2}\}} d\hat{\zeta} \,
e^{-\vert \hat{\zeta} \vert^{2}/(4t)}\\
&\leq \big\vert D-D_{R/2}\big\vert - (4\pi
t)^{-1/2}\Hdim^{m-1}(\partial D)\int_{0}^{R/2} dr \,
\left(\frac{R-r}{R}\right)^{m-1} \int_{r}^{\infty} d\zeta
\,e^{-\zeta^{2}/(4t)}\nonumber \\ &\ \ \ +
\frac{\sqrt{2}}{2}\big\vert D-D_{R/2}\big\vert e^{-R^{2}/(8t)}+
(4\pi t)^{-m/2}\Hdim^{m-1}(\partial D)\int_{0}^{R/2} dr \,
\left(\frac{R}{R-r}\right)^{m-1}\nonumber \\ &\ \ \
\times\int_{r}^{r+R} d\zeta \,e^{-\zeta^{2}/(4t)} \int_{\{\vert
\hat{\zeta} \vert > ((\zeta-r)R)^{1/2}\}} d\hat{\zeta} \,
e^{-\vert \hat{\zeta} \vert^{2}/(4t)}\\
&\leq \big\vert D-D_{R/2}\big\vert - (4\pi
t)^{-1/2}\Hdim^{m-1}(\partial D)\int_{0}^{R/2} dr \,
\left(1-(m-1)\frac{r}{R}\right) \int_{r}^{\infty} d\zeta \,
e^{-\zeta^{2}/(4t)}\nonumber \\ &\ \ \  +
\frac{\sqrt{2}}{2}\big\vert D-D_{R/2}\big\vert e^{-R^{2}/(8t)}+
(4\pi t)^{-m/2}\Hdim^{m-1}(\partial D)\nonumber \\
&\ \ \ \times\int_{0}^{R/2} dr
\,\left(1+(m-1)2^{m-1}\frac{r}{R}\right) \int_{r}^{r+R} d\zeta
\,e^{-\zeta^{2}/(4t)} \int_{\{\vert \hat{\zeta} \vert >
((\zeta-r)R)^{1/2}\}} d\hat{\zeta} \,
e^{-\vert \hat{\zeta} \vert^{2}/(4t)}
\end{align*}
\begin{align*}
&\leq \big\vert D-D_{R/2}\big\vert - (4\pi
t)^{-1/2}\Hdim^{m-1}(\partial D)\int_{0}^{\infty} dr
\int_{r}^{\infty} d\zeta \,e^{-\zeta^{2}/(4t)}\nonumber \\ &\ \ \
+ (4\pi t)^{-1/2}\Hdim^{m-1}(\partial D)\int_{R/2}^{\infty}
dr \int_{r}^{\infty} d\zeta \,e^{-\zeta^{2}/(4t)}\\
&\ \ \ +(4\pi t)^{-1/2}(m-1)\Hdim^{m-1}(\partial
D)R^{-1}\int_{0}^{\infty} dr \, r \int_{r}^{\infty} d\zeta \,
e^{-\zeta^{2}/(4t)}\nonumber \\ &\ \ \ +
\frac{\sqrt{2}}{2}\big\vert D-D_{R/2}\big\vert e^{-R^{2}/(8t)} +
(4\pi t)^{-m/2}\Hdim^{m-1}(\partial D)(1+(m-1)2^{m-2})\nonumber
\\ &\ \ \ \times\int_{0}^{\infty} dr \int_{0}^{R} d\theta
\,e^{-(r+\theta)^{2}/(4t)} \int_{\{\vert \hat{\zeta} \vert
> (\theta R)^{1/2}\}} d\hat{\zeta} \,
e^{-\vert \hat{\zeta} \vert^{2}/(4t)}\\
&\leq \big\vert D-D_{R/2}\big\vert -
\pi^{-1/2}\Hdim^{m-1}(\partial D)t^{1/2} +
2\pi^{-1/2}\Hdim^{m-1}(\partial D)t^{1/2}
e^{-R^{2}/(32t)}\nonumber \\ &\ \ \
+2^{-1}(m-1)\Hdim^{m-1}(\partial D)R^{-1}t+
\frac{\sqrt{2}}{2}\big\vert D-D_{R/2}\big\vert
e^{-R^{2}/(8t)}\nonumber \\ &\ \ \  + 2^{-1}(1+(m-1)2^{m-2})(4\pi
t)^{-(m-1)/2}\Hdim^{m-1}(\partial D)\nonumber \\ &\ \ \  \times \int_{0}^{\infty} d\theta
\int_{\{\vert \hat{\zeta} \vert >
(\theta R)^{1/2}\}} d\hat{\zeta} \,e^{-\vert \hat{\zeta} \vert^{2}/(4t)}\\
&= \big\vert D-D_{R/2}\big\vert - \pi^{-1/2}\Hdim^{m-1}(\partial
D)t^{1/2} + 2\pi^{-1/2}\Hdim^{m-1}(\partial
D)t^{1/2}e^{-R^{2}/(32t)}\nonumber \\ & \ \ \
+2^{-1}(m-1)\Hdim^{m-1}(\partial D)R^{-1}t +
\frac{\sqrt{2}}{2}\big\vert D-D_{R/2}\big\vert
e^{-R^{2}/(8t)}\nonumber \\ &\ \ \ +
(m-1)(1+(m-1)2^{m-2})\Hdim^{m-1}(\partial D)R^{-1}t.
\end{align*}
If $D$ is not connected, then $D=\cup_{i=1}^{N} D^{i}$ for some $N \in \mathbb{N}$ and the above result holds for each
component $D^{i}$ of $D$. By taking the sum of the above result for each component $D^{i}$ over $i=1,\dots ,N$, we obtain;
\begin{align*}
&\int_{D} dx \int_{D} dy \, p(x,y;t) \leq \sum_{i=1}^{N}
\left(\big\vert D^{i}_{R/2}\big\vert + \int_{D^{i}-D^{i}_{R/2}}dx
\int_{\mathbb{R}^{m} - B_{2}^{i}}
dy \, p(x,y;t)\right)\\
&= \big\vert D_{R/2} \big\vert + \sum_{i=1}^{N}
\int_{D^{i}-D^{i}_{R/2}}dx \int_{\mathbb{R}^{m} - B_{2}^{i}}
dy \, p(x,y;t)\\
&\leq \vert D \vert - \pi^{-1/2}\Hdim^{m-1}(\partial D)t^{1/2} +
2\pi^{-1/2}\Hdim^{m-1}(\partial D)t^{1/2}
e^{-R^{2}/(32t)}\nonumber \\ &\ \ \  +
2^{-1}(m-1)\Hdim^{m-1}(\partial D)R^{-1}t  +
\frac{\sqrt{2}}{2}\big\vert D-D_{R/2}\big\vert
e^{-R^{2}/(8t)}\nonumber \\ &\ \ \  +
(m-1)(1+(m-1)2^{m-2})\Hdim^{m-1}(\partial D)R^{-1}t.
\end{align*}
\end{proof}
Now using Lemma~\ref{L:lowerhc}, Lemma~\ref{L:upperhc} and
\eqref{e19b}, we obtain that
\[\bigg\vert H_{D}(t) - \vert D \vert +\pi^{-1/2} \Hdim^{m-1}(\partial D)t^{1/2}\bigg\vert \leq
m^{3}2^{m+2}\vert D \vert R^{-2}t,\] which completes the proof of
Theorem~\ref{T:main}.

\section{Proofs of Theorem \ref{the2}, Proposition \ref{prop} and Theorem \ref{the3}.}\label{sec4}

To prove part (i) of Theorem \ref{the2} we suppose the
integrability condition in the hypothesis of Theorem \ref{the2}
holds. We let $R>0$ and $\alpha\in(0,1)$. For $n
\in \N$, we let
\begin{equation*}
D_n=D\cap B(0;n).
\end{equation*}
Then
\begin{align*}
&H_{D_n}(t)=\int_{D_n}dx \int_{D_n} dy \, p(x,y;t)\nonumber
\\& = \int_{D_n}dx\int_{D_n\cap B(x;R)}dy \,
p(x,y;t)+\int_{D_n}dx\int_{D_n\cap (\R^{m} - B(x;R))}dy \,
p(x,y;t)\nonumber \\ & \le (4\pi t)^{-m/2}\int_{D_n}dx \,
\mu_{D_n}(x;R)\nonumber
\\ &\ \ \  +(1-\alpha)^{-m/2}\int_{D_n}dx \, e^{-\alpha
R^2/(4t)}\int_{D_n}dy \, p(x,y;(1-\alpha)^{-1}t)\nonumber \\ & =(4\pi
t)^{-m/2}\int_{D_n}dx \, \mu_{D_n}(x;R)+(1-\alpha)^{-m/2}e^{-\alpha
R^2/(4t)}H_{D_n}((1-\alpha)^{-1}t)\nonumber \\ & \le(4\pi
t)^{-m/2}\int_{D_n}dx \, \mu_{D_n}(x;R)+(1-\alpha)^{-m/2}e^{-\alpha
R^2/(4t)}H_{D_n}(t),
\end{align*}
where we have used that $t\mapsto H_{D_n}(t)$ is decreasing. We
now choose $\alpha$ and $R$ such that $(1-\alpha)^{-m/2}e^{-\alpha
R^2/(4t)}<1$. This is clearly satisfied for $R=(8mt)^{1/2}$ and
$\alpha=\frac{3}{4}$. Since $H_{D_n}(t)\le|B(0;n)|<\infty$, we may
rearrange the terms and obtain that
\begin{align}\label{e43}
H_{D_n}(t)&\le(4\pi t)^{-m/2}\left(1-2^me^{-3m/2}
\right)^{-1}\int_{D_n}dx \, \mu_{D_n}(x;(8mt)^{1/2})\nonumber \\ &
=c_2(m)t^{-m/2}\int_{D_n}dx \, \mu_{D_n}(x;(8mt)^{1/2})\nonumber
\\ & \le c_2(m)t^{-m/2}\int_{D}dx \, \mu_{D}(x;(8mt)^{1/2}).
\end{align}
For $(x,y)\in \R^{2m}$, we let
\begin{equation*}
f_n(x,y)=p(x,y;t)\mathds{1}_{D_n}(x)\mathds{1}_{D_n}(y).
\end{equation*}
Then $(f_n)$ is a monotone increasing sequence of non-negative
functions, converging pointwise to
$p(x,y;t)~\mathds{1}_{D}(x)\mathds{1}_{D}(y)$. The Monotone
Convergence Theorem applied to $(f_n)$ with product measure $dxdy$
gives that $\lim_{n\rightarrow \infty} H_{D_n}(t)=H_D(t)$. This
together with \eqref{e43} implies the right-hand side of
\eqref{e10d}.

To prove the lower bound in Theorem \ref{the2} we have for $R>0$,
\begin{align*}
H_D(t)&\ge \int_Ddx\int_{D\cap B(x;R)}dy \, p(x,y;t)\\ &
\ge (4\pi t)^{-m/2}e^{-R^2/(4t)}\int_Ddx\int_{D\cap
B(x;R)}dy\\& =(4\pi
t)^{-m/2}e^{-R^2/(4t)}\int_Ddx \, \mu_D(x;R).
\end{align*}
The choice $R=(8mt)^{1/2}$ gives the lower bound in \eqref{e10d}.

To prove part (ii) we let $\alpha=\frac{t_2}{t_1}\le 1$, and
suppose $H_D(t_1)<\infty$. Then following \eqref{e43}, we have
that
\begin{align*}
H_{D_n}(\alpha t_1)& \le c_2(m)(\alpha
t_1)^{-m/2}\int_{D}dx \, \mu_{D}(x;(8m\alpha t_1)^{1/2})\\ &
\le c_2(m)t_2^{-m/2}\int_{D}dx \, \mu_{D}(x;(8m t_1)^{1/2}).
\end{align*}
Letting $n\rightarrow \infty$ we obtain that
\begin{equation}\label{e46}
H_{D}(t_2) \le c_2(m)t_2^{-m/2}\int_{D}dx \, \mu_{D}(x;(8m
t_1)^{1/2}).
\end{equation}
By the first inequality in \eqref{e10d} we also have that
\begin{equation}\label{e47}
\int_{D}dx \, \mu_{D}(x;(8m t_1)^{1/2})\le
c_1(m)^{-1}t_1^{m/2}H_D(t_1),
\end{equation}
and \eqref{e10g} follows from \eqref{e46} and \eqref{e47}.
\\[5pt]
\noindent {\it Proof of Proposition \ref{prop}.}
We bound
$\int_{\Omega(\alpha,\Sigma)}dxdx'\mu_{\Omega(\alpha,\Sigma)}((x,x');(8mt)^{1/2})$
from below, and follow the notation of \eqref{e10h}. We restrict
the integral such that $\diam(x^{-\alpha} \Sigma)\le (4mt)^{1/2}$.
That is
\begin{equation}\label{e48}
x\ge \left(\frac{\diam
(\Sigma)}{(4mt)^{1/2}}\right)^{1/\alpha}:=x_0(t).
\end{equation}
We now choose $t$ such that the above set of $x$ satisfies $x\ge
1$. That is
\begin{equation*}
t \le \frac{1}{4m}\diam(\Sigma)^2.
\end{equation*}
We will choose $c>0$ such that for any $x' \in x^{-\alpha}\Sigma$
the cylinder
\begin{equation*}
(x,x+c)\times(x+c)^{-\alpha}\Sigma \subset
B((x,x');(8mt)^{1/2})\cap \Omega(\alpha,\Sigma).
\end{equation*}
Suppose $(z,z')\in (x,x+c)\times(x+c)^{-\alpha}\Sigma$. Then
$z'\in x^{-\alpha}\Sigma$ and
$|(z,z')-(x,x')|^2=(|z-x|^2+|z'-x'|^2)\le c^2+
(\diam(x^{-\alpha}\Sigma))^2\le c^2+4mt$. We conclude that
$(z,z')\in B((x,x');(8mt)^{1/2})$ for $c=(4mt)^{1/2}$.  Since
$\Omega(\alpha,\Sigma)$ is horn-shaped,
$(x,x+c)\times(x+c)^{-\alpha}\Sigma \subset
\Omega(\alpha,\Sigma)$. We have that for all $x$ satisfying
\eqref{e48}
\begin{align*}
\mu_{\Omega(\alpha,\Sigma)}((x,x');(8mt)^{1/2})&\ge
(4mt)^{1/2}\mathcal{H}^{m-1}((x+(4mt)^{1/2})^{-\alpha}\Sigma)\\
&=
(4mt)^{1/2}(x+(4mt)^{1/2})^{-(m-1)\alpha}\mathcal{H}^{m-1}(\Sigma).
\end{align*}
We have that
\begin{align*}
&\int_{\Omega(\alpha,\Sigma)}dxdx' \,
\mu_{\Omega(\alpha,\Sigma)}((x,x');(8mt)^{1/2})
\\ &
\ge \int_{(x_0(t),\infty)}dx\int_{x^{-\alpha}\Sigma}
dx' \, (4mt)^{1/2}(x+(4mt)^{1/2})^{-(m-1)\alpha}\mathcal{H}^{m-1}(\Sigma)
\\ &
=\int_{(x_0(t),\infty)}dx \, (4mt)^{1/2}x^{-(m-1)\alpha}(x+(4mt)^{1/2})^{-(m-1)\alpha}\mathcal{H}^{m-1}(\Sigma)^2.
\end{align*}
The integral diverges for $0<\alpha\le (2(m-1))^{-1}$. For
$(2(m-1))^{-1}<\alpha<(m-1)^{-1}$ we see that the right-hand side
above is of order $x_0(t)^{1-2(m-1)\alpha}$. Using \eqref{e48} and
the lower bound in Theorem \ref{the2} we conclude that there
exists $C_1(\alpha,m,\Sigma)>0$ such that for all $t$ sufficiently
small
\begin{equation}\label{e521}
H_{\Omega(\alpha,\Sigma)}(t)\ge
C_1(\alpha,m,\Sigma)t^{((m-1)\alpha-1)/(2\alpha)}.
\end{equation}

Next we obtain an upper bound for
$\int_{\Omega(\alpha,\Sigma)}dxdx' \,
\mu_{\Omega(\alpha,\Sigma)}((x,x');(8mt)^{1/2})$. We note that for
all $x'\in x^{-\alpha}\Sigma$ and $x\ge 1+(8mt)^{1/2},$
\begin{equation*}
(x-(8mt)^{1/2},x+(8mt)^{1/2})\times(x-(8mt)^{1/2})^{-\alpha}\Sigma
\supset B((x,x');(8mt)^{1/2})\cap \Omega(\alpha,\Sigma).
\end{equation*}
Hence
\begin{align*}
\mu_{\Omega(\alpha,\Sigma)}((x,x');(8mt)^{1/2})&\le
2(8mt)^{1/2}\mathcal{H}^{m-1}((x-(8mt)^{1/2})^{-\alpha}\Sigma)\\
&=
2(8mt)^{1/2}(x-(8mt)^{1/2})^{-(m-1)\alpha}\mathcal{H}^{m-1}(\Sigma).
\end{align*}
We let $x_0(t)$ be as in \eqref{e48},
$(2(m-1))^{-1}<\alpha<(m-1)^{-1}$, and let $t>0$ be such that
$x_0(t)\ge 1+(8mt)^{1/2}$. Then
\begin{align}\label{e55}
&\int_{\{(x,x')\in \Omega(\alpha,\Sigma):x>x_0(t)\}}dxdx' \,
\mu_{\Omega(\alpha,\Sigma)}((x,x');(8mt)^{1/2})\nonumber
\\ &
\le2(8mt)^{1/2}\int_{x_0(t)}^{\infty}dx
\int_{x^{-\alpha}\Sigma}dx' \,
(x-(8mt)^{1/2})^{-(m-1)\alpha}\mathcal{H}^{m-1}(\Sigma)\nonumber
\\
&=2(8mt)^{1/2}\int_{x_0(t)}^{\infty}dx \, x^{-(m-1)\alpha}(x-(8mt)^{1/2})^{-(m-1)\alpha}\mathcal{H}^{m-1}(\Sigma)^2\nonumber
\\
&\le2(8mt)^{1/2}(2(m-1)\alpha-1)^{-1}(x_0(t)-(8mt)^{1/2})^{1-2(m-1)\alpha}\mathcal{H}^{m-1}(\Sigma)^2\nonumber
\\ & \le C_2(\alpha,m,\Sigma)t^{((2m-1)\alpha-1)/(2\alpha)}.
\end{align}
for some finite constant $C_2(\alpha,m,\Sigma)$ and all $t$
sufficiently small.

For all $(x,x')\in \Omega(\alpha,\Sigma)$ with $x\le x_0(t)$ we
bound
\begin{equation*}
\mu_{\Omega(\alpha,\Sigma)}((x,x');(8mt)^{1/2})\le
\omega_m(8mt)^{m/2}.
\end{equation*}
Hence
\begin{align}\label{e57}
&\int_{\{(x,x')\in \Omega(\alpha,\Sigma):1<x<x_0(t)\}}dxdx' \,
\mu_{\Omega(\alpha,\Sigma)}((x,x');(8mt)^{1/2})\nonumber
\\ &
\le\omega_m(8mt)^{m/2}|\{\Omega(\alpha,\Sigma):1<x<x_0(t)\}|\nonumber
\\ &
=\omega_m(8mt)^{m/2}(1-(m-1)\alpha)^{-1}\left(x_0(t)^{1-(m-1)\alpha}-1\right)\mathcal{H}^{m-1}(\Sigma)\nonumber
\\
&\le\omega_m(8
mt)^{m/2}(1-(m-1)\alpha)^{-1}x_0(t)^{1-(m-1)\alpha}\mathcal{H}^{m-1}(\Sigma)\nonumber
\\ &=C_3(\alpha,m,\Sigma) t^{((2m-1)\alpha-1)/(2\alpha)}.
\end{align}

By Theorem \ref{the2}, \eqref{e55} and \eqref{e57} we conclude
that
\begin{equation}\label{e58}
H_{\Omega(\alpha,\Sigma)}(t)\le
c_2(m)\left(C_2(\alpha,m,\Sigma)+C_3(\alpha,m,\Sigma)\right)t^{((m-1)\alpha-1)/(2\alpha)}.
\end{equation}
Proposition \ref{prop} now follows by \eqref{e521} and
\eqref{e58}. \hspace{62mm}$\Box$

The following is a crucial ingredient in the proof of Theorem \ref{the3}.
\begin{proposition}
Let $D$ be an open set in $\R^m$ with finite Lebesgue measure, and
let $s\ge0,\ t\ge0$. Then
\begin{equation}\label{e59}
F_D(s+t)\le F_D(s)+F_D(t).
\end{equation}
\end{proposition}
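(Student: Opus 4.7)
My plan is to reduce the claimed subadditivity to an equivalent lower bound on $H_D$ and then exploit the semigroup identity together with the uniform bound $0\le u_D(\cdot;t)\le1$. Since $F_D(r)=|D|-H_D(r)$, the inequality $F_D(s+t)\le F_D(s)+F_D(t)$ is equivalent to
\begin{equation*}
H_D(s)+H_D(t)-|D|\le H_D(s+t),
\end{equation*}
so it suffices to produce this lower bound for $H_D(s+t)$.

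The next step is to rewrite $H_D(s+t)$ in ``$L^2$-form''. Using the semigroup property \eqref{e7}, Tonelli's theorem and the symmetry of $p$, one gets, exactly as in the identity already noted after \eqref{e7}, that
\begin{equation*}
H_D(s+t)=\int_D dx\int_D dy\int_{\R^m}p(x,z;s)p(z,y;t)\,dz=\int_{\R^m}u_D(z;s)\,u_D(z;t)\,dz.
\end{equation*}
Combining this with $|D|=\int_{\R^m}\mathds{1}_D(z)^2\,dz$ and $H_D(r)=\int_{\R^m}\mathds{1}_D(z)\,u_D(z;r)\,dz$ for $r=s,t$, a direct expansion yields the clean identity
\begin{equation*}
H_D(s+t)+|D|-H_D(s)-H_D(t)=\int_{\R^m}\bigl(u_D(z;s)-\mathds{1}_D(z)\bigr)\bigl(u_D(z;t)-\mathds{1}_D(z)\bigr)\,dz.
\end{equation*}

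To finish, I would check the sign of the integrand pointwise. Since $0\le u_D(z;r)\le 1$, for $z\in D$ both factors $u_D(z;s)-1$ and $u_D(z;t)-1$ are non-positive, while for $z\in \R^m-D$ both factors $u_D(z;s)$ and $u_D(z;t)$ are non-negative; in either case their product is $\ge 0$. Hence the right-hand side is non-negative, which is precisely the required lower bound on $H_D(s+t)$ and thus \eqref{e59}. There is no essential obstacle in the argument; the only thing that requires attention is justifying the interchange of integrations in the identity for $H_D(s+t)$, which is immediate from Tonelli because the heat kernel is non-negative, and the algebraic step of recognising the difference as an integral whose integrand splits cleanly according to $\mathds{1}_D$.
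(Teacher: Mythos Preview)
Your proof is correct. Both your argument and the paper's rest on the semigroup identity together with the pointwise bound $0\le u_D\le 1$, but the packaging differs. The paper works with $F_D$ directly via Preunkert's formula $F_D(t)=\int_{\R^m-D}dx\int_D dy\,p(x,y;t)$, inserts $p(x,y;s+t)=\int_{\R^m}dz\,p(x,z;s)p(z,y;t)$, splits the $z$-integral over $D$ and $\R^m-D$, and then bounds $u_D(z;t)\le 1$ and $u_{\R^m-D}(z;s)\le 1$ to recognise the two pieces as $F_D(s)$ and $F_D(t)$. Your route via $H_D$ is equivalent but has the pleasant feature of producing an exact identity,
\[
F_D(s)+F_D(t)-F_D(s+t)=\int_{\R^m}\bigl(u_D(z;s)-\mathds{1}_D(z)\bigr)\bigl(u_D(z;t)-\mathds{1}_D(z)\bigr)\,dz\ge 0,
\]
which, upon splitting over $D$ and $\R^m-D$, identifies the slack explicitly as $\int_D u_{\R^m-D}(z;s)\,u_{\R^m-D}(z;t)\,dz+\int_{\R^m-D} u_D(z;s)\,u_D(z;t)\,dz$.
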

\begin{proof} By the definition of $F_D(t)$ in \eqref{e10k}, and
\eqref{e6} we recover Preunkert's formula (see \cite{mP03}):
\begin{align}\label{e60}
F_D(t)&= \int_{\R^m}dx \int_D dy\ p(x,y;t)-\int_Ddx \int_Ddy\
p(x,y;t)\nonumber \\ &=\int_{\R^m-D}dx\int_Ddy\ p(x,y;t).
\end{align}
By the heat semigroup property we have that
\begin{equation}\label{e61}
p(x,y;s+t)=\int_{\R^m}dz \, p(x,z;s)p(z,y;t).
\end{equation}
By \eqref{e60}, \eqref{e61} and Tonelli's Theorem we have that
\begin{align*}
F_D(s+t)&=\int_{\R^m-D}dx\int_Ddy\ p(x,y;s+t) \\ &=
\int_{\R^m}dz \,\int_{\R^m-D}dx\int_Ddy\ p(x,z;s)p(z,y;t)
\\ &=\int_{D}dz \,\int_{\R^m-D}dx\int_Ddy\
p(x,z;s)p(z,y;t) \\ &\ \ \ +\int_{\R^m-D}dz
\,\int_{\R^m-D}dx\int_Ddy\ p(x,z;s)p(z,y;t) \\
&=\int_Ddz\int_{\R^m-D}dx\ p(x,z;s)u_D(z;t) \\ &\ \ \ +
\int_{\R^m-D}dz\int_Ddy\ p(z,y;t)u_{\R^{m} - D}(z;s) \\
&\le\int_Ddz\int_{\R^m-D}dx\ p(x,z;s)+\int_{\R^m-D}dz\int_Ddy\
p(z,y;t) \\ &=F_D(s)+F_D(t).
\end{align*}
\end{proof}
We note that in the proof above we did not use the Euclidean
structure. A similar statement and proof would hold for open sets
$D$ in complete Riemannian manifolds which are stochastically
complete. See Section 3.3 in  \cite{Gr} for details on stochastic
completeness.
\\[5pt]
\emph{Proof of Theorem \ref{the3}.} To prove the lower bound we
have by \eqref{e60} that for any $R>0$,
\begin{align*}
F_D(t)&\ge \int_Ddx\int_{(\R^m-D)\cap B(x:R)}dy\ p(x,y;t)\nonumber
\\ & \geq(4\pi t)^{-m/2}e^{-R^2/(4t)}\int_Ddx\ \nu_D(x;R).
\end{align*}
The choice $R=4(mt)^{1/2}$ yields the lower bound in \eqref{e10n}.

To prove the upper bound we have the following estimate.
\begin{align*}
F_D(t)&=\int_Ddx\int_{(\R^m-D)\cap B(x;R)}dy\ p(x,y;t)\nonumber
\\ &\ \ \  +\int_Ddx\int_{(\R^m-D)\cap (\R^m-B(x;R))}dy\ p(x,y;t)\nonumber \\
&\le (4\pi t)^{-m/2}\int_Ddx\ \nu_D(x;R)\nonumber
\\ &\ \ \ +2^{m/2}e^{-R^2/(8t)}\int_Ddx\int_{(\R^m-D)\cap (\R^m-B(x;R))}dy\
p(x,y;2t)\nonumber \\ & \le (4\pi t)^{-m/2}\int_Ddx\
\nu_D(x;R)+2^{m/2}e^{-R^2/(8t)}F_D(2t).
\end{align*}
From \eqref{e59} we infer that $F_D(2t)\le 2F_D(t)$. We conclude
that
\begin{equation}\label{e65}
F_D(t)\le (4\pi t)^{-m/2}\int_Ddx\
\nu_D(x;R)+2^{(2+m)/2}e^{-R^2/(8t)}F_D(t).
\end{equation}
We now choose $R=4(mt)^{1/2}$ so that
$2^{(2+m)/2}e^{-R^2/(8t)}<1$. Rearranging terms in \eqref{e65}
yields the upper bound in \eqref{e10n}. \hspace{79mm}$\Box$


\begin{thebibliography}{99}

\bibitem{AKSZ}Aikawa, H., Kilpel\"ainen, T., Shanmugalingam, N., Zhong, X.: Boundary Harnack principle for
$p$- harmonic Functions in smooth Euclidean domains. Potential
Anal. \textbf{26} (2007) 281--301.

\bibitem{mvdB87}
van den Berg, M.: On the Asymptotics of the heat equation and
bounds on traces associated with the Dirichlet Laplacian. J.
Funct. Anal. \textbf{71} (1987) 279--293.

\bibitem{vdB05}van den Berg, M.: Heat content and Hardy inequality for complete Riemannian manifolds.
J. Funct. Anal. \textbf{233} (2006) 478--493.

\bibitem{vdB07}van den Berg, M.: Heat flow and Hardy inequality in complete Riemannian
 manifolds with singular initial conditions. J. Funct. Anal. \textbf{250} (2007) 114--131.

\bibitem{mvdB13}
van den Berg, M.: Heat flow and perimeter in $\mathbb{R}^{m}$.
Potential Anal. \textbf{39} (2013) 369--387.

\bibitem{vdBD89}
van den Berg, M., Davies, E.B.: Heat flow out of regions in
$\mathbb{R}^{m}$. Mathematische Zeitschrift \textbf{202} (1989)
463--482.

\bibitem{vdBG04}van den Berg, M., Gilkey, P. B.: Heat content and a Hardy inequality for complete
Riemannian manifolds. Bull. London Math. Soc. \textbf{36} (2004)
577--586.

\bibitem{vdBG} van den Berg, M., Gilkey, P.: Heat flow out of a
compact manifold. J. Geometric Analysis, DOI
10.1007/s12220-014-9485-2.

\bibitem{vdBG1} van den Berg, M., Gilkey, P.: Heat content with singular initial temperature
and singular specific heat. Potential Anal. \textbf{42} (2015)
1--38.

\bibitem{vdBGGK}van den Berg, M., Gilkey, P., Grigor'yan, A., Kirsten, K.: Hardy inequality and heat semigroup estimates
 for Riemannian manifolds with singular data. Comm. Partial Differential Equations \textbf{37} (2012) 885--900.

\bibitem{vdBGK} van den Berg, M., Gilkey, P., Kang, H.: Neumann heat
content asymptotics with singular initial temperature and singular
specific heat. Journal of Fixed Point Theory and Applications
\textbf{14} (2013) 267--298.

\bibitem{LCE} Evans, L. C.: Partial Differential Equations.
Graduate Studies in Mathematics, {\bf 19}, American Mathematical
Society, Providence RI, (2002).

\bibitem{EG92}
Evans, L. C., Gariepy, R.F.:
Measure Theory and Fine Properties of Functions.
Chapman and Hall/ CRC, Boca Raton (1992).

\bibitem{Gr} Grigor'yan, A.: Estimates of heat kernels
on Riemannian manifolds, in Spectral Theory and Geometry. ICMS
Instructional Conference, Edinburgh, 1998. London Math. Soc.
Lecture Note Ser., \textbf{273}, Cambridge Univ. Press, Cambridge,
(1999) 140--225.

\bibitem{mP03}
Preunkert, M.: A semigroup version of the isoperimetric
inequality. Semigroup Forum \textbf{68} (2004) 233--245.

\bibitem{MPPP07}
Miranda, M. Jr., Pallara, D., Paronetto, F., Preunkert, M.:
Short-time heat flow and functions of bounded variation in
$\mathbb{R}^{N}$. Annales de la Facult\'{e} des Sciences de
Toulouse \textbf{16} (2007) 125--145.

\end{thebibliography}
\end{document}